\newtheorem{theorem}{Theorem} %\labelformat{theo}{Theorem~#1}
\newtheorem{definition}{Definition}[section]%\labelformat{defi}{Definition~#1}
\newtheorem{remark}{Remark}[section]
\newtheorem{proposition}{Proposition}[section]
\newtheorem{lemma}{Lemma}[section]
\newtheorem{corollary}{Corollary}[section]
\newtheorem*{conjecture}{Conjecture}
\numberwithin{equation}{section}
\def\namedlabel#1#2{\begingroup
    #2%
    \def\@currentlabel{#2}%
    \phantomsection\label{#1}\endgroup
}
\newcommand{\ensemblenombre}[1]{\mathbb{#1}}
\newcommand{\Z}{\ensemblenombre{Z}} 
\newcommand{\R}{\ensemblenombre{R}}
\newcommand{\A}{\mathcal{A}}
\newcommand{\StateSpace}{S}
\newcommand{\ConfSpace}{\Omega}
\newcommand{\MultiConfSpace}{\boldsymbol{\Omega}}
\newcommand{\Conf}{\omega}
\newcommand{\MultiConf}{\boldsymbol{\omega}}
\newcommand{\Dim}{d}
\newcommand{\Types}{q}
\newcommand{\Typesbis}{\bar{\Types}}
\newcommand{\SigmaAlgebra}{\mathcal{F}}
\newcommand{\MultiSigmaAlgebra}{\boldsymbol{\mathcal{F}}}
\newcommand{\Leb}{\mathcal{L}^\Dim}
\newcommand{\Poisson}[1]{\pi^{#1}}
\newcommand{\MultiPoisson}[1]{\boldsymbol{\pi}^{#1}}
\newcommand{\Radius}{Q}
\newcommand{\MultiRadius}{\boldsymbol{\Radius}}
\newcommand{\Intensity}{z}
\newcommand{\MultiIntensity}{\boldsymbol{\Intensity}}
\newcommand{\MultiParamPoisson}{\MultiIntensity,\MultiRadius}
\newcommand{\MultiP}{\boldsymbol{P}}
\newcommand{\PartitionFunction}[1]{\boldsymbol{Z}(#1)}
\newcommand{\zr}{\boldsymbol{Z}_{r_1}(\Lambda, \MultiConf_{\Lambda^c})}
\newcommand{\FinitePartitionFunction}{\boldsymbol{Z}_{n}}
\newcommand{\Entropy}{\mathcal{I}}
\newcommand{\PercolationThreshold}{z_c}
\newcommand{\Specification}[1]{\mathscr{P}^{wr}_{#1}}
\newcommand{\Proportion}{\alpha}
\newcommand{\MultiProportion}{\boldsymbol{\alpha}}
\newcommand{\ncc}{N_{cc}}
\newcommand{\nccd}{N_{cc}^{r}}
\newcommand{\tP}{\widetilde{P}}
\newcommand{\1}{\mathds{1}}
\begin{document}
\title{Phase transition for continuum Widom-Rowlinson model with random radii}
\author[1]{David Dereudre}
\author[2]{Pierre Houdebert}
\affil[1]{Laboratoire de Math\'ematiques Paul Painlev\'e\\University of Lille 1, France 
 \texttt{david.dereudre@math.univ-lille1.fr}}
\affil[2]{Aix Marseille Univ, CNRS, Centrale Marseille, I2M, Marseille, France 
 \texttt{pierre.houdebert@gmail.com}}
\maketitle
%%%%%%%%%%%%%%%%%%%%%%%%%%%%%%%%%

\begin{abstract}
{In this paper we study the phase transition of continuum Widom-Rowlinson measures in $\R^d$ with $\Types$ types of particles and random radii. 
Each particle $x_i$ of type $i$ is marked by a random radius $r_i$ distributed by a probability measure $\Radius_i$ on $\R^+$. The distributions $Q_i$ may be different for different $i$, this setting is called the non-symmetric case. The particles of same type do not interact with each other whereas a particle $x_i$ and $x_j$ with different type $i\neq j$ interact via an exclusion hardcore interaction forcing $r_i+r_j$ to be smaller than $|x_i-x_j|$. 
In the symmetric integrable case (i.e. $\int r^\Dim \Radius_1(dr)<+\infty$ and $Q_i=Q_1$ for every $1\le i\le\Types$), we show that the Widom-Rowlinson measures exhibit a standard phase transition providing uniqueness, when the activity is small, and co-existence of $q$ ordered phases, when the activity is large. 
In the non-integrable case (i.e.  $\int r^\Dim \Radius_i(dr)=+\infty$, $1\le i \le q$), we show another type of phase transition. We prove, when the activity is small, the existence of at least $q+1$ extremal phases and we conjecture that, when the activity is large, only the $q$ ordered phases subsist. 
We prove a weak version of this conjecture in the symmetric case by showing that the Widom-Rowlinson measure with free boundary condition is a mixing of the $\Types$ ordered phases if and only if the activity is large. 

} 
  \bigskip

\noindent {\it Key words: Gibbs point process, DLR equation, Boolean model, continuum percolation, random cluster model, Fortuin-Kasteleyn representation.} 
\end{abstract}

\section{Introduction} \label{section_introduction}
In this paper we deal with the non-symmetric continuum Widom-Rowlinson model in $\R^d$ with $\Types$ types of particles and with random radii. 
Each type of particle $1\le i\le \Types$ has its proper activity parameter $\Intensity_i>0$ and a proper  probability measure $\Radius_i$ on $\R^+$ for the distribution of radii (the distributions $Q_i$ may be different, this setting is called the non-symmetric case).  
Each particle $x_i$ of type $i$ is marked by a random radius $r_i$ distributed by $\Radius_i$. 
The particles of the same type do not interact with each other whereas a particle $x_i$ and $x_j$ with different type $i\neq j$ interact via an exclusion hardcore interaction forcing $r_i+r_j$ to be smaller than $|x_i-x_j|$. 
This model can be viewed as a collection of $\Types$  Boolean models, each of intensity $\Intensity_i$ and radii distribution $\Radius_i$, $i=1\dots \Types$, conditioned to not overlap each other. 

This model is a generalisation of the simple and beautiful model introduced in the late 1960' by Widom and Rowlinson \cite{widom_rowlinson} where $q=2$ and the radii are deterministic.
Its interest comes not only from its applicability in the description of a binary gas, but also from the fact that it was the very first continuum model for which a phase transition was rigorously proved, first by Ruelle \cite{ruelle_1971} using the so-called Peierls' argument. 
A modern proof of this phase transition, relying  on percolation arguments and a Fortuin-Kasteleyn representation, was done in \cite{chayes_kotecky,georgii_haggstrom}.
Regarding the non-symmetric case with $q\ge 3$, phase transition results were proved in several articles such as \cite{bricmont_kuroda_lebowitz_1984,mazel_suhov_stuhl} using the Pirogov-Sinai theory. All these results concern the case of deterministic radii.

In the present paper, we investigate the random radii case which can be interpreted as a random media or as a size distribution of particles. 
We prove  several phase transition results described below which are, depending on the distribution of radii,  similar or different from the deterministic case.

The formal definition of Widom-Rowlinson measures is based on the standard DLR equations, which prescribe the local conditional distributions of the model, see definition \ref{def_WR}. 
The existence and uniqueness of such solutions are not obvious. The set of solutions is denoted by $WR(\MultiParamPoisson)$ where $\MultiIntensity=(\Intensity_1, \dots, \Intensity_\Types)$ is the vector of activities and 
$\MultiRadius := (\Radius_1,\dots ,\Radius_\Types)$ the vector of radii distributions. 
When the radii are uniformly bounded, a general existence result by Ruelle \cite{ruelle_livre_1969} ensures the existence of Widom-Rowlinson measures. 
When the radii are not bounded, a long range interaction occurs and the existence is more delicate. 
In a first theorem we prove the existence of Widom-Rowlinson measures without any assumption on activities or radii. 
The set $WR(\MultiParamPoisson)$  is never empty.

We say that a phase transition occurs when the geometry of $WR(\MultiParamPoisson)$ changes drastically with the choice of parameters $\MultiIntensity$, considering that $\MultiRadius$ is fixed. 
In the case of deterministic radii, it is proved in papers mentioned above that $WR(\MultiParamPoisson)$ is a singleton for $\MultiIntensity$ small enough and that there exists $\MultiIntensity$ large enough such that $WR(\MultiParamPoisson)$ contains  $q$  distinct extremal ordered phases (Widom-Rowlinson measure with boundary condition full of particles with a prescribed type). 
Precisely in \cite{bricmont_kuroda_lebowitz_1984} it is proved that for any $(z_1,z_2,\ldots, z_{q-1})$ large enough there exists $z_q$ such that $WR(\MultiParamPoisson)$ contains  $q$ distinct extremal ordered phases. 
This result is based on an extension of the Pirogov-Sinai theory of phase transitions in general lattice spin systems to continuum systems. In our random radii setting we do not obtain such a general result. Actually the unbounded radii seem to be a serious and difficult obstacle for using Pirogov-Sinai machinery. Nevertheless, in the integrable setting, $\int r^\Dim \Radius_i(dr)<+\infty$, $1\le i \le q$), we show first that $WR(\MultiParamPoisson)$ is a singleton for small activities $\MultiIntensity$. 
And,  in the symmetric integrable case $\Radius_i=\Radius$, $1\le i\le q$, we show that for activities $\MultiIntensity=(z,z,\ldots,z)$ large enough, the set $WR(\MultiParamPoisson)$ contains  $q$ distinct extremal ordered phases. These results derive from a coupling result in \cite{hofer-temmel_houdebert_2017}, a standard Fortuin-Kasteleyn representation and a percolation result developed in \cite{houdebert_2017}. 
In conclusion, in the integrable symmetric setting, $WR(\MultiParamPoisson)$ exhibits a standard phase transition similar to the deterministic radii case.

Let us now turn to the most interesting and surprising result of the present paper. In the non-integrable case (i.e. $\int r^\Dim \Radius_i(dr)=+\infty$, $1\le i \le \Types$), we show another type of phase transition on the geometry of $WR(\MultiParamPoisson)$. 
First, it is easy to see that $WR(\MultiParamPoisson)$ contains always $q$ ordered phases corresponding each to a Poisson point process with only one type of particle whose  balls cover the full space $\R^\Dim$. 
But we prove, when the activity is small enough, the existence of a $\Types+1$-th extremal phase. 
As far as we know, the existence of such a $\Types+1$-th extremal phase has never been observed for a continuum Widom-Rowlinson model with deterministic radii. 
Let us note that our result is valid in the non-symmetric setting and that the proof is not based on the Pirogov-Sinai theory. 
The main ingredient is a discrimination by specific entropy. 
We show that the Widom-Rowlinson measure with free boundary condition has a specific entropy smaller than any ordered phase.

Moreover we conjecture that in the non integrable setting, when the activity is large enough, the set $WR(\MultiParamPoisson)$ is exactly the convex hull of the $\Types$ ordered phases (i.e. for large activities, only  the $\Types$ ordered phases subsist and the disordered phase disappears). It would imply a phase transition result since the set $ WR( \MultiParamPoisson)$ would have exactly $\Types$ extremal Gibbs measures for $\MultiIntensity$ large and at  least  $\Types +1$ extremal Gibbs measures for $\MultiIntensity$ small. Our belief in this conjecture is based on a similar conjecture for the continuum random cluster presented in \cite{dereudre_houdebert} and for which a heuristic proof is given. Moreover simulations in the sense of the conjecture have been implemented in \cite{houdebert_these}.

In the present paper we prove a weak version of this conjecture by showing that the symmetric Widom-Rowlinson measure  with free boundary condition is a mixing of the $\Types$ ordered phases if and only if the activity is large. 
The proof is based on a renewal argument in a prescribed direction which implies that the scales in the thermodynamic limit are different for each direction. Indeed in one direction the size of the box is of order $n$ and in the $d-1$ other directions the size is of order $\log(n)$. We believe that the result remains true with other choice of scales, in particular the standard scale where the size of the box is of the same order in each direction.

The paper is organized as follows.
Section \ref{section_preliminaries} introduces the notations, main definitions and tools. In Section \ref{section_results}   the results of the article are stated.
The proof of Theorem \ref{theo_existence_wr} concerning the existence of Widom-Rowlinson measures is done in Section \ref{section_preuve_theo_existence}.
In Section \ref{section_preuve_non_monochromaticite_petite activite} we prove the existence of a $(\Types +1)$-th extremal phase in the non-integrable setting.
Finally Section \ref{section_preuve_conjecture_faible} is devoted to the proof of the weak version of the conjecture.

\section{Preliminaries} \label{section_preliminaries}
\subsection{Space}
Let us consider the state space $\StateSpace := \R ^\Dim \times \R^+$ with $\Dim \geq 1$ being the dimension.
Let $\ConfSpace$ be the set of locally finite configurations $\Conf$ on $\StateSpace$.
This means that $|\Conf \cap (\Lambda \times \R^+)|< \infty$ for every bounded Borel set $\Lambda$ of $\R^\Dim$, with $|\omega|$ being the cardinality of the configuration $\Conf$.
We write $\Conf_{\Lambda}$ as a shorthand for $\Conf \cap (\Lambda \times \R^+)$.
The configuration space is embedded with the usual $\sigma$-algebra $\SigmaAlgebra$ generated by the counting variables.
To a configuration $\Conf \in \ConfSpace$ we associate the germ-grain structure
\begin{align*}
L(\Conf) :=
\underset{X \in \Conf}{\bigcup} B(X),
\end{align*}
where $B(X)$ is the closed ball associated to the marked point $X=(x,r)$, centred at $x$ and of radius $r$.

Let $\Types$ be an integer larger than 1 fixed through the paper, and consider the space $\MultiConfSpace := \ConfSpace^\Types$ of multi-index configurations $\MultiConf := (\Conf^1, \dots , \Conf^\Types)$ embedded with the $\sigma$-algebra 
$\MultiSigmaAlgebra := \SigmaAlgebra ^{\otimes \Types}$.
An element $i\in \{1, \dots , \Types\}$ is called a \emph{type} or a \emph{colour}.
We write $L(\MultiConf)$ as a shorthand for 
$\underset{1 \leq i \leq \Types}{\cup} L(\Conf^i)$ and $\MultiConf_{\Lambda}$
for $(\Conf^1_{\Lambda}, \dots , \Conf^\Types_{\Lambda})$.
We also write $(x,r) \in \MultiConf$ when there exists a colour $i$ such that 
$(x,r) \in \Conf^i$.

\subsection{Poisson point processes}
For $\Intensity>0$ and $\Radius$ a probability measure on $\R^+$, let 
$\Poisson{\Intensity, \Radius}$ be the distribution on $\ConfSpace$ of a Poisson point process with intensity measure $\Intensity \Leb \otimes \Radius$, where $\Leb$ stands for the Lebesgue measure on $\R^\Dim$. 
Recall that it means 
\begin{itemize}
\item for every bounded Borel set $\Lambda$, the distribution of the number of points in $\Lambda \times \R^+$ under $\Poisson{\Intensity, \Radius}$ is a Poisson distribution with parameter $z\Leb(\Lambda)$;
\item given the number of points in every bounded $\Lambda$, the points are  independent and uniformly distributed in $\Lambda$. 
Each point is marked by a mark distributed by $\Radius$ and all the marks are independent.    
\end{itemize}
We refer to \cite{daley_vere_jones} for details on Poisson point process.

For multi-index $\MultiIntensity=(\Intensity_1, \dots, \Intensity_\Types)$ and 
$\MultiRadius := (\Radius_1,\dots ,\Radius_\Types)$, 
$\MultiPoisson{\MultiIntensity,\MultiRadius} := \Poisson{\Intensity_1,\Radius_1}\otimes \dots \Poisson{\Intensity_\Types,\Radius_\Types}$ is the distribution of a multi-index Poisson point process.
For $\Lambda \subseteq \R^d$ bounded, we denote by $\Poisson{\Intensity,\Radius}_\Lambda$ (respectively $\MultiPoisson{\MultiIntensity,\MultiRadius}_\Lambda$) the restriction of  $\Poisson{\Intensity,\Radius}$(respectively $\MultiPoisson{\MultiIntensity,\MultiRadius}$) on $\Lambda \times \R^+$ (respectively $(\Lambda \times \R^+)^\Types$). 
Note that measures $\Poisson{\Intensity, \Radius}$ and 
$\MultiPoisson{\MultiParamPoisson}$ are \emph{stationary}, which means they are invariant under all translations of vector $x \in \R^\Dim$.

The connectivity properties of the Poisson point process play a crucial role in this study.
It changes drastically depending on an integrability condition, formalised in the following definition.
\begin{definition}\label{definition_int}
A family $\MultiRadius$ is said to satisfy the \emph{integrability assumption} if for every colour $i$,
\begin{align}
\label{eq_integrability_assumption}
\int_{\R^+ } r^\Dim \Radius_i(dr) <\infty.
\end{align}
If not satisfied we say that $\MultiRadius$ is \emph{not integrable} (or in the \emph{extreme case}). 
We say that  $\MultiRadius$ is \emph{completely non-integrable} if for every $1\le i \le q$,  $\int_{\R^+ } r^\Dim \Radius_i(dr) =\infty$.
\end{definition}
%This definition translates trivially to a measure $\Radius$ by considering it as a family of one element.

\subsection{Widom-Rowlinson measures}
The Widom-Rowlinson measures are defined with standard DLR equations requiring the probability measures to have prescribed conditional probabilities.
Let us first define the event $\A$ of \emph{authorized} (or \emph{allowed} ) configurations
\begin{align*}
\A = \{
\MultiConf \in \MultiConfSpace, \ \forall 1 \leq i<j \leq \Types, \
L(\Conf^i) \cap L(\Conf^j)= \emptyset 
\} \,.
\end{align*}
The Widom-Rowlinson specification on a bounded $\Lambda \subseteq \R^d$ with boundary condition $\MultiConf_{\Lambda^c}$ is
\begin{align*}
\Specification{\Lambda, \MultiConf_{\Lambda^c}}(d \MultiConf'_\Lambda)
: =
\frac{\1_{\A}(\MultiConf'_{\Lambda} \cup \MultiConf_{\Lambda^c})}{\PartitionFunction{\Lambda,\MultiConf_{\Lambda^c}}}
\MultiPoisson{\MultiIntensity, \MultiRadius}_{\Lambda} (d \MultiConf'_{\Lambda}),
\end{align*}
with
\begin{align*}
\PartitionFunction{\Lambda,\MultiConf_{\Lambda^c}} := 
\int_{\MultiConfSpace} \1_{\A}(\MultiConf'_{\Lambda} \cup \MultiConf_{\Lambda^c})
\MultiPoisson{\MultiParamPoisson}_{\Lambda} (d \MultiConf'_{\Lambda}).
\end{align*}

\begin{definition}
\label{def_WR}
A probability measure $\MultiP$ on $\MultiConfSpace$ is a Widom-Rowlinson measure of parameters $\MultiIntensity$ and $\MultiRadius$, written 
$\MultiP \in WR(\MultiParamPoisson)$, if $\MultiP$ is stationary and if for every bounded Borel set $\Lambda \subseteq \R^d$ and  every bounded measurable function $f$,
\begin{subequations}
\label{eq_def_wr}
\begin{align}
\label{eq_partition_function_non_deg_def_wr}
\PartitionFunction{\Lambda,\MultiConf_{\Lambda^c}} >0 \quad \MultiP(d \MultiConf)-a.s;
\end{align}
\begin{align}
\label{eq_dlr_definition_wr}
\int_{\MultiConfSpace} f \ d \MultiP 
=
\int_{\MultiConfSpace} \int_{\MultiConfSpace}
f(\MultiConf'_{\Lambda} \cup \MultiConf_{\Lambda^c} )
\Specification{\Lambda, \MultiConf_{\Lambda^c}}(d \MultiConf'_\Lambda)
\MultiP (d \MultiConf ).
\end{align}
\end{subequations}
\end{definition}
For every $\Lambda$ the equations \eqref{eq_dlr_definition_wr} are called DLR equations, named after Dobrushin, Lanford and Ruelle.
Thanks to \cite{georgii_livre}, a Widom-Rowlinson measure is a mixture of ergodic Widom-Rowlinson measures.

\subsection{Stochastic domination}
Let us discuss  stochastic domination, which is going to be a key element of several proofs in the paper.
Recall that an event $E \in \SigmaAlgebra$ is said \emph{increasing} if for $\Conf' \in E$ and $\Conf \supseteq \Conf'$, we have $\Conf \in E$.
Finally if $P$ and $P'$ are two probability measures on $\ConfSpace$, the measure $P$ is said to \emph{stochastically dominate} the measure $P'$, written $P' \preceq P$, if $P'(E) \leq P(E)$ for every increasing event $E \in \SigmaAlgebra$.
Those definitions naturally extend to the case of the multi-index configuration space $\MultiConfSpace$.

The following proposition is a direct application of 
\cite[Theorem 1.1]{georgii_kuneth} and gives a comparison between a Widom-Rowlinson measure and a Poisson point process.

\begin{proposition}[Stochastic domination]
\label{propo_stochastic_dom_poisson_wr}
For every bounded $\Lambda \subseteq \R^d$ and every boundary condition 
$\MultiConf_{\Lambda^c}$, we have 
$$\Specification{\Lambda, \MultiConf_{\Lambda^c}}(d \MultiConf'_\Lambda)
\preceq \MultiPoisson{\MultiParamPoisson}_{\Lambda}.$$

Furthermore for every $\MultiP \in WR( \MultiParamPoisson)$
$$
\MultiP  \preceq \MultiPoisson{\MultiParamPoisson}.
$$
\end{proposition}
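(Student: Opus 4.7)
The plan is to apply the general stochastic domination criterion of \cite[Theorem 1.1]{georgii_kuneth}, which compares two point processes through their Papangelou conditional intensities. I first establish the finite-volume statement for the specification kernel, and then transfer it to an arbitrary Widom--Rowlinson measure via the DLR equations.

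For the finite-volume claim, I compute the Papangelou intensity of $\Specification{\Lambda,\MultiConf_{\Lambda^c}}$ with respect to the reference measure (Lebesgue on $\R^\Dim$ tensored with the radii distributions $Q_i$). Inserting a marked point $X=(x,r)$ of colour $i$, with $x\in\Lambda$, into a configuration $\MultiConf'_\Lambda$ multiplies the density $\1_\A(\MultiConf'_\Lambda\cup\MultiConf_{\Lambda^c})/\PartitionFunction{\Lambda,\MultiConf_{\Lambda^c}}$ by a factor
\[
\frac{\1_\A\bigl(\MultiConf'_\Lambda\cup\{X\}_i\cup\MultiConf_{\Lambda^c}\bigr)}{\1_\A(\MultiConf'_\Lambda\cup\MultiConf_{\Lambda^c})}\in\{0,1\},
\]
where the denominator is positive on the support of the specification. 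Consequently the Papangelou intensity of $\Specification{\Lambda,\MultiConf_{\Lambda^c}}$ at a point of colour $i$ is bounded pointwise by $\Intensity_i$, which is exactly the Papangelou intensity of $\MultiPoisson{\MultiParamPoisson}_\Lambda$. Both measures being supported on configurations located in $\Lambda\times\R^+$ and factoring over colours, the hypotheses of \cite[Theorem 1.1]{georgii_kuneth} hold and yield $\Specification{\Lambda,\MultiConf_{\Lambda^c}}\preceq\MultiPoisson{\MultiParamPoisson}_\Lambda$.

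For the second assertion, let $\MultiP\in WR(\MultiParamPoisson)$. Integrating the finite-volume domination against $\MultiP$ and using the DLR identity \eqref{eq_dlr_definition_wr} shows that the projection of $\MultiP$ on any bounded box $\Lambda$ is a mixture of probability measures each dominated by $\MultiPoisson{\MultiParamPoisson}_\Lambda$, hence is itself dominated by $\MultiPoisson{\MultiParamPoisson}_\Lambda$ (stochastic domination being preserved under mixtures). Choosing an increasing sequence of bounded Borel sets $\Lambda_n\uparrow\R^\Dim$ and approximating any increasing event of $\MultiSigmaAlgebra$ by its local versions on $\Lambda_n$ via a monotone class argument, I conclude $\MultiP\preceq\MultiPoisson{\MultiParamPoisson}$.

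The only subtlety is verifying the Papangelou bound on the degenerate set where $\1_\A$ vanishes, but this is automatic since $\Specification{\Lambda,\MultiConf_{\Lambda^c}}$ is defined on $\{\PartitionFunction{\Lambda,\MultiConf_{\Lambda^c}}>0\}$ and the ratio entering the intensity lies in $\{0,1\}$ throughout; no finer comparison than \textbf{adding a point can only destroy allowedness, never restore it} is needed, which is the natural repulsive nature of the hardcore interaction.
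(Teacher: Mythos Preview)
Your argument is correct and follows exactly the route the paper indicates: the proposition is stated there as ``a direct application of \cite[Theorem 1.1]{georgii_kuneth}'' with no further details, and you have simply spelled out the Papangelou-intensity comparison and the DLR mixture step that make this application work. The extension from local to global domination via a monotone class/limit argument is standard and acceptable here.
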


\section{Results} \label{section_results}
This section states the main results of the present article.
\subsection{Existence}
The first question of interest in statistical physics where the objects are defined through prescribed conditional equations, namely the DLR equations \eqref{eq_dlr_definition_wr}, is the question of the existence of a probability measure solving those equations.
The following theorem gives a positive answer to this question.
\begin{theorem}
\label{theo_existence_wr}
For any $d\ge 1$ and  any parameters $\MultiIntensity$ and $\MultiRadius$, 
the set $WR(\MultiParamPoisson)$ is not empty.
\end{theorem}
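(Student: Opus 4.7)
My plan is to construct a Widom-Rowlinson measure as a stationary weak limit of finite-volume Gibbs measures, following the standard Ruelle-Georgii thermodynamic-limit program. When the radii are uniformly bounded, Ruelle's classical superstability theorem applies directly; the substantive content of the theorem therefore lies in handling unbounded $\MultiRadius$, where the exclusion interaction becomes long-range and the specification ceases to be a local function of the boundary condition.

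\textbf{Finite-volume approximation and tightness.} I would truncate the radii: for $M>0$ set $\Radius_i^{(M)}(\cdot) := \Radius_i(\cdot \cap [0,M])/\Radius_i([0,M])$ and $\Intensity_i^{(M)} := \Intensity_i\, \Radius_i([0,M])$, so that $\Intensity_i^{(M)} \Radius_i^{(M)}(dr) = \Intensity_i \Radius_i(dr)\,\1_{[0,M]}(r)$ increases pointwise to $\Intensity_i \Radius_i$ as $M\nearrow\infty$. The truncated model has bounded interaction range $2M$, so by Ruelle's theorem there exists $P^{(M)} \in WR(\MultiIntensity^{(M)}, \MultiRadius^{(M)})$. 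The stochastic-domination Proposition~\ref{propo_stochastic_dom_poisson_wr}, combined with the monotone bound $\Intensity_i^{(M)} \Radius_i^{(M)} \leq \Intensity_i \Radius_i$, then gives $P^{(M)} \preceq \MultiPoisson{\MultiParamPoisson}$ uniformly in $M$. This uniform local domination yields tightness of the family $\{P^{(M)}\}_M$ in the local topology on $\MultiConfSpace$; I would extract a subsequential limit $P$, stationary by stationarity of each $P^{(M)}$.

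\textbf{DLR equations and the main obstacle.} It remains to check that $P$ satisfies the DLR equation \eqref{eq_dlr_definition_wr} and the non-degeneracy \eqref{eq_partition_function_non_deg_def_wr} for every bounded $\Lambda$. Each $P^{(M)}$ satisfies its own DLR equation with the $M$-truncated specification, and the aim is to pass to the limit in $M$ after integrating against a local test function. The main obstacle is that $\MultiConf_{\Lambda^c} \mapsto \Specification{\Lambda, \MultiConf_{\Lambda^c}}(d \MultiConf'_\Lambda)$ is not continuous in the local topology for unbounded $\MultiRadius$: a far-away boundary ball of very large radius can forbid local configurations in $\Lambda$, an effect invisible to finite-range test functions. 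I expect to handle this via three coupled ingredients: monotone convergence on $\1_{\A}$ as one enlarges the set of admissible boundary balls; dominated convergence of the test function under the uniform Poisson bound; and a $P$-a.s.\ lower bound for $\PartitionFunction{\Lambda, \MultiConf_{\Lambda^c}}$ obtained by restricting to the positive-probability event that no exterior ball meets $\Lambda$, which itself follows from stochastic domination by $\MultiPoisson{\MultiParamPoisson}$. Making all three estimates consistent jointly as $M\to\infty$ is where I expect the technical core of the argument to lie.
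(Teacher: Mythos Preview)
Your approach---truncating radii to $[0,M]$, invoking Ruelle for each $P^{(M)}$, and then letting $M\to\infty$---is different from the paper's, which keeps $\MultiRadius$ fixed and takes a thermodynamic limit in the volume $\Lambda_n\nearrow\R^\Dim$ of free-boundary finite-volume measures, using specific-entropy compactness (Proposition~\ref{propo_compactness_entropy_specifique}) rather than stochastic domination for tightness. The paper also dispatches the non-integrable case in one line before any limit is taken: if some $\int r^\Dim \Radius_i(dr)=\infty$, the single-type Poisson process $\MultiPoisson{\bar{\MultiIntensity}^i,\MultiRadius}$ covers $\R^\Dim$ a.s.\ and trivially solves the DLR equations. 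You do not separate this case, and your $M\to\infty$ limit is opaque there.

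There is a genuine gap in your DLR passage. First, the partition-function lower bound is a non-issue: once $P(\A)=1$, the empty configuration in $\Lambda$ is always authorized, so $\PartitionFunction{\Lambda,\MultiConf_{\Lambda^c}}\ge \exp(-\sum_i \Intensity_i\,\Leb(\Lambda))$ on $\A$; your ``no exterior ball meets $\Lambda$'' event is unnecessary and misidentifies the difficulty. Second---and this is the real obstacle---``monotone convergence on $\1_\A$'' and ``dominated convergence under the Poisson bound'' do not by themselves yield the \emph{uniform} approximation of the non-local map $\MultiConf\mapsto\int f\,d\Specification{\Lambda,\MultiConf_{\Lambda^c}}$ by local functions that is needed to push the limit through the outer integral. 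The paper's device is the localizing event $\Upsilon_k=\{\forall (x,r)\in\MultiConf,\ r\le |x|/2+k\}$: on $\Upsilon_k$ (intersected with $U_{r_1}$ inside $\Lambda$), distant boundary balls cannot reach $\Lambda$, so both $\1_\A$ and the partition function depend only on $\MultiConf_\Delta$ for a fixed bounded $\Delta=\Delta(k,r_1)$ (Lemma~\ref{lemme_preuve_dlr_localisation}). Integrability of $\MultiRadius$ together with the domination by $\MultiPoisson{\MultiParamPoisson}$ then gives $\MultiPoisson{\MultiParamPoisson}(\Upsilon_k^c)\to 0$, hence the same uniformly along the approximating sequence (Lemma~\ref{lemme_preuve_dlr_ups_total}). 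Your proposal has no analogue of $\Upsilon_k$, and the diagonal limit you set up---the specification and the measure both changing with $M$---makes this localization harder, not easier, to arrange than the paper's single volume limit.
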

The proof of this theorem follows a standard scheme used for several models 
\cite{dereudre_2009,dereudre_drouilhet_georgii,dereudre_houdebert}.
First, using the specific entropy, we build an accumulation point of a sequence of finite
volume  Gibbs  measures. 
The difficulty is then to prove that the accumulation  point satisfies the DLR equations \eqref{eq_dlr_definition_wr}.
This is done using the stochastic domination result of Proposition \ref{propo_stochastic_dom_poisson_wr}.
A detailed proof is given in Section \ref{section_preuve_theo_existence}.

\begin{remark}
The existence of a Widom-Rowlinson measure was already known in several cases.
First, in the cases of bounded radii, the Widom-Rowlinson interaction is finite range and therefore the existence is a consequence of a general result of Ruelle \cite{ruelle_livre_1969}.
Second, in the symmetric case where 
$\Intensity_1=\dots = \Intensity_\Types$ and
$\Radius_1=\dots = \Radius_\Types$, a Widom-Rowlinson measure can be built from a \emph{Continuum Random Cluster Model} where to each connected component is assigned a colour uniformly over the $\Types$ choices.
This relation is known as the \emph{Fortuin-Kasteleyn} representation.
The existence of the Continuum Random Cluster Model with random radii has been recently proved in \cite{dereudre_houdebert}, yielding the existence of a Widom-Rowlinson measure in the symmetric case.
\end{remark}

\subsection{Phase transition in the integrable case}
Now that Thereom \ref{theo_existence_wr} proves the existence of at least one Widom-Rowlinson measure, the second question concerns the uniqueness, non-uniqueness and consequently the phase transition between both regimes. It is usual, for Gibbs point processes with different type of particles, to show the uniqueness for small activities and non-uniqueness for large activity. 
In the integrable case (see definition \ref{definition_int}), we recover both regimes.

First, the following proposition proves the uniqueness for small activities.
\begin{proposition}
\label{prop_unicité_disagreement_percolation}
Write $\Intensity_i= \Intensity \Proportion_i$ where 
$\MultiProportion := (\Proportion_1, \dots , \Proportion_\Types)$ is a discrete probability measure.
If $\MultiRadius$ satisfies the integrability assumption (Definition \ref{definition_int}), then there exists an  unique Widom-Rowlinson measure $P$ in $WR(\Intensity \MultiProportion,\MultiRadius)$ as soon as 
$\Intensity \leq 
\PercolationThreshold \left(\Dim,\sum_i \Proportion_i \Radius_i \right)$, where $\PercolationThreshold (\Dim, \Radius)>0$ is the percolation threshold of the Poisson Boolean model in $\R^\Dim$ of radius measure $\Radius$.
\end{proposition}

\begin{proof}
From  Proposition \ref{propo_stochastic_dom_poisson_wr} we have the stochastic domination
$\Specification{\Lambda, \MultiConf_{\Lambda^c}}(d \MultiConf'_\Lambda)
\preceq \MultiPoisson{\MultiParamPoisson}_{\Lambda}$.
Therefore as a direct consequence of Theorem 3.2 in \cite{hofer-temmel_houdebert_2017} we have uniqueness of the Widom-Rowlinson measure as soon as the "single-type" Poisson Boolean model $\Poisson{\Intensity, \sum_i \Proportion_i \Radius_i}$ does not percolate. 
Thus the result. 
Let us note that $\PercolationThreshold (\Dim, \Radius)$ is positive for every $\Radius$ satisfying the integrability assumption $\int_{\R^+ } r^\Dim \Radius(dr) <\infty$ \cite{gouere_2008}. 
\end{proof}
%\begin{remark}
%In several case we can obtain a bound uniform with respect to $\MultiProportion$.
%For instance if $\Radius_1=\dots=\Radius_\Types$ then the percolation threshold
%$\PercolationThreshold \left(\Dim,\sum_i \Proportion_i \Radius_i \right)$ does not depend on $\MultiProportion$.
%Furthermore if all radius distribution $\Radius_i$ are stochastically dominated by some probability measure $\bar{\Radius}$ satisfying the integrability assumption, then we have uniqueness when 
%$z<\PercolationThreshold (\Dim, \bar{\Radius})$.
%\end{remark}
%%%%%%%%%%%%%%%%%%%%%%%%%%%%%%%%%%%%%%%%%%
%%%%%%%%%%%%%%%%%%%%%%%%%%%%%%%%%%%%%%%%%%
In the symmetric case a non-uniqueness result was proved, initially in \cite{chayes_kotecky,georgii_haggstrom} in the case of deterministic radii.
A non-trivial generalisation to the case of unbounded radii was proved in \cite{houdebert_2017} and is stated in the following proposition. Let us just mention that the unbounded radii case requires a delicate study of the percolation properties of the continuum random cluster model which does not dominate a Poisson process anymore.

\begin{proposition}
\label{proposition_transition_phase_cas_inte_perco}
Let us consider the symmetric case $\Intensity := \Intensity_1 = \dots = \Intensity_\Types$ and $\Radius := \Radius_1= \dots = \Radius_\Types$ in dimension $d\ge 2$. If $\Radius$ satisfies the integrability assumption and $\Radius(\{ 0 \})=0$, then for activities $\Intensity$ large enough, there exist $\Types$ distinct ergodic Widom-Rowlinson measures.
\end{proposition}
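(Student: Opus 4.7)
The plan is to follow the classical Fortuin--Kasteleyn (FK) strategy, adapted to random unbounded radii, and to couple it with the continuum percolation input of \cite{houdebert_2017}. The geometric fact that ultimately drives the phase transition is that the symmetric continuum random cluster model (CRCM) associated with the Widom--Rowlinson specification percolates as soon as $\Intensity$ is large enough, provided $\Radius$ is integrable and $\Radius(\{0\})=0$.

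First I would construct $\Types$ candidate ordered phases $\MultiP^{(i)}$, $i=1,\ldots,\Types$, as accumulation points of the finite-volume specifications $\Specification{\Lambda_n, \MultiConf_{\Lambda_n^c}^{(i)}}$ along a sequence $\Lambda_n\uparrow\R^\Dim$, for a dense monochromatic boundary condition $\MultiConf^{(i)}_{\Lambda_n^c}$ of color $i$ (for instance, a Poisson Boolean configuration outside $\Lambda_n$ consisting entirely of color-$i$ particles, so that the density of color $i$ on $\Lambda_n^c$ is bounded below uniformly in $n$). Existence of accumulation points satisfying the DLR equations \eqref{eq_dlr_definition_wr} is obtained by the same specific-entropy compactness scheme that drives the proof of Theorem \ref{theo_existence_wr}, together with the stochastic domination from Proposition \ref{propo_stochastic_dom_poisson_wr}; spatial stationarity of $\MultiP^{(i)}$ is then enforced by averaging over translations before passing to the limit.

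Next I would use the FK representation recalled in the remark after Theorem \ref{theo_existence_wr}: in the symmetric case, a WR measure can be realised by first sampling a CRCM and then independently assigning a uniform color in $\{1,\ldots,\Types\}$ to each connected component of the associated germ--grain structure. Under the monochromatic-$i$ boundary condition, every cluster connected to the boundary automatically inherits color $i$. By the percolation result of \cite{houdebert_2017} (valid precisely for $\Radius$ integrable with $\Radius(\{0\})=0$ and $\Intensity$ large), the underlying CRCM has almost surely a unique unbounded cluster, which under $\MultiP^{(i)}$ is deterministically colored $i$. Consequently, the expected volume fraction covered by color $i$,
\begin{equation*}
\rho_i(\MultiP^{(i)}) := \MultiP^{(i)}\bigl(0\in L(\Conf^i)\bigr),
\end{equation*}
is strictly larger than $\rho_j(\MultiP^{(i)})$ for $j\neq i$, because the finite clusters contribute equally to each color by symmetry while the unbounded cluster contributes positively only to color $i$. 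Hence the measures $\MultiP^{(1)},\ldots,\MultiP^{(\Types)}$ are pairwise distinct.

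Finally I would extract ergodic components. Each $\MultiP^{(i)}$ is stationary and, by the ergodic decomposition from \cite{georgii_livre} recalled after Definition \ref{def_WR}, is a mixture of ergodic WR measures; since $\rho_i$ is translation-invariant, one may select from this decomposition an ergodic component $\widetilde{\MultiP}^{(i)}$ still satisfying $\rho_i(\widetilde{\MultiP}^{(i)}) > \rho_j(\widetilde{\MultiP}^{(i)})$ for every $j\neq i$, producing the desired $\Types$ distinct ergodic WR measures. The main obstacle in this plan is the percolation input itself: the CRCM does not stochastically dominate a Poisson Boolean model (the cluster weight $\Types^{\ncc}$ tends to merge balls together) and, with unbounded radii, one cannot coarse-grain naively, so percolation has to be established through the delicate analysis of \cite{houdebert_2017}; once this is accepted, the FK coloring argument above is essentially standard.
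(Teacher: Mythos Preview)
Your approach is essentially the one the paper indicates: the paper does not give a detailed proof of this proposition but records that it is a consequence of the Fortuin--Kasteleyn representation together with the percolation of the continuum random cluster model for large activities established in \cite{houdebert_2017}, which is precisely the strategy you outline (monochromatic boundary conditions, FK coloring of clusters, the unbounded cluster inheriting the boundary color, and symmetry breaking via a density observable).

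One small slip worth correcting: your parenthetical remark that the cluster weight $\Types^{\ncc}$ ``tends to merge balls together'' is backwards. For $\Types>1$ the weight $\Types^{\ncc}$ favors configurations with \emph{more} connected components, so the CRCM is stochastically \emph{below} the Poisson Boolean model, not above it; this is exactly why percolation of the CRCM is not automatic from percolation of Poisson and why, in the unbounded-radii case, the delicate analysis of \cite{houdebert_2017} is required (the usual trick of dominating the CRCM from below by a Poisson process of smaller intensity breaks down). This does not affect the correctness of your outline, only the heuristic explanation of where the difficulty lies.
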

This result is a consequence of the Fortuin-Kasteleyn representation and the percolation of the Continuum Random Cluster Model for large activities $\Intensity$. 
As usual, the $\Types$ distinct ergodic Widom-Rowlinson measures corresponds to the distinct Gibbs measures with boundary condition $1\le i\le \Types$. 

\begin{remark}${}$
The assumption $\Radius (\{ 0 \})=0$ is an artefact of the proof of the percolation of the Continuum Random Cluster Model developed in \cite{houdebert_2017}.
In this paper the author emphasizes that the proof would carry out the same for radius measures $\Radius$ having small atoms in $0$, and he conjectures that the results would be true with the maximal assumption $Q(\{ 0 \})<1$. We do not investigate this generalization here.
\end{remark}

In the case of deterministic radii $r_1, r_2, \dots, r_q$, it is proved in \cite{bricmont_kuroda_lebowitz_1984} that for large activities $\Intensity_1, \Intensity_2, \ldots, \Intensity_{\Types -1}$ there exists 
$\Intensity_\Types>0$ such that the set of Widom-Rowlinson measures $WR((\Intensity_1,\Intensity_2,\ldots,\Intensity_\Types), (\delta_{r_1},\delta_{r_2},\ldots,\delta_{r_\Types}))$ exhibits at least $\Types$ extremal phases. 
This result is based on an extension of the Pirogov-Sinai theory of phase transitions in general lattice spin systems to continuum systems. 
In the case of non-symmetric random radii we do not know if such result holds. However it is reasonable to believe that in the case of bounded random radii, the Pirogov-Sinai machinery is feasible and similar results could be obtained. 

%
%
%Let us cite also \cite{mazel_suhov_stuhl} where the Pirogov-Sinai theory is used to characterize the phase space of Widom-Rowlinson measures with  deterministic collection of radii. 
%{\color{red} Je ne comprend pas cette phrase, je pense qu'il y a une erreur.
%As far as we know, for the continuum Widom-Rowlinson model, it is only proved the existence of such ordered phase (i.e phase with boundary condition $1\le i\le \Types$).} 
%In the completely non-integrable setting, studied in the next section, we prove the existence of $(\Types +1)$-th disordered phase.

%ystems goes over into a phase-diagram of the WidomRowlinson
%model at large fugacities z=(z o ..... z r_ ~). There is in z-space a
%point where the system has r-pure phases, lines with r-1 phases, two
%dimensional surfaces with r-2 phases, etc. 
%
%
%ystems goes over into a phase-diagram of the WidomRowlinson
%model at large fugacities z=(z o ..... z r_ ~). There is in z-space a
%point where the system has r-pure phases, lines with r-1 phases, two
%dimensional surfaces with r-2 phases, etc. 

\subsection{Existence of $\mathbf{\Types +1}$ extremal phases in the non integrable setting}

The main results of the paper are presented in this section where we investigate the phase diagram in the non-integrable setting. A central notion here is the monochromaticity or polychromaticity of Widom-Rowlinson measures. This is defined as follows.
\begin{definition}
Let $Mono$ be the event of \emph{monochromatic} configurations 
$\MultiConf \in \MultiConfSpace$ such that $\Conf_i= \emptyset$ for all $1\le i\le \Types$ excepted one index. 
Let $Poly$ be the set of \emph{polychromatic} configurations, meaning that 
$Poly=Mono^c$.
A probability measure $\MultiP$ on $\MultiConfSpace$ is said \emph{monochromatic} (respectively \emph{polychromatic}) if 
$\MultiP (Mono)=1$ (respectively $\MultiP(Poly)=1$).
\end{definition}

Let us note that, in the case of monochromatic $\MultiP$, the index $i$ such that $\omega_i\neq \emptyset$ can be random. In the case of radii $\MultiRadius$ satisfying the integrability assumption \eqref{eq_integrability_assumption}, it is clear that every Widom-Rowlinson measure 
$\MultiP \in WR( \MultiParamPoisson)$ is polychromatic. 
Therefore, the question of monochromaticity is relevant only in this non-integrable setting. Moreover, we know that such monochromatic Widom-Rowlinson measures exist in the non-integrable setting as mentioned in the next proposition. The proof is obvious and is not detailed here.

%this question is trivially answered by the following proposition.
%\begin{proposition}
%\label{proposition_polychromaticite_cas_inte}
%If $\MultiRadius$ satisfies the integrability assumption, then every Widom-Rowlinson measure 
%$\MultiP \in WR( \MultiParamPoisson)$ is polychromatic.
%\end{proposition}
%\begin{proof}
%Without loss of generality let us assume that $\MultiP$ is ergodic.
%By Proposition \ref{propo_stochastic_dom_poisson_wr} we have $\MultiP \preceq \MultiPoisson{\MultiParamPoisson}$ and  it is known, using for example the events $\upsk$ and the Lemma \ref{lemme_upsilon_proba}, that for $\Lambda' \subseteq \Lambda$ bounded with $\Lambda'$ not too small and $\Lambda$ large enough, with positive probability we have $L(\MultiConf_{\Lambda^c}) \cap \Lambda' = \emptyset$.
%Furthermore using the DLR($\Lambda$) equation \eqref{eq_dlr_definition_wr} one can show that $\MultiP (Poly)>0$.
%This implies that $\MultiP(Poly)=1$ since the event $Poly$ is translation invariant.
%\end{proof}

\begin{proposition}\label{proposition_extreme_evident}
For every $1\le i \le \Types$, such that $\int_{\R^+} r^\Dim \Radius_i(dr)=+\infty$, the Poisson point process 
$\Poisson{\bar{\MultiIntensity}^i , \MultiRadius}$ with 
$\bar{\MultiIntensity}^i=(0 \dots, 0, \Intensity_i,0,\dots,0)$, is an extremal phase of $WR(\MultiParamPoisson)$.
\end{proposition}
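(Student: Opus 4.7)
I would check the two defining requirements separately: (a) $\MultiP:=\Poisson{\bar{\MultiIntensity}^i,\MultiRadius}$ lies in $WR(\MultiParamPoisson)$, that is, it is stationary and satisfies the DLR system \eqref{eq_def_wr}; and (b) it is extremal in that set. Both rest on a single geometric input: the non-integrability $\int r^\Dim\Radius_i(dr)=+\infty$ forces the type-$i$ Boolean model to cover $\R^\Dim$ entirely, almost surely.

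\textbf{Coverage step.} I would first show that under $\Poisson{\Intensity_i,\Radius_i}$, for every bounded Borel $\Lambda\subseteq\R^\Dim$, $L(\Conf_{\Lambda^c})=\R^\Dim$ almost surely. For a fixed $y\in\R^\Dim$, the number of points $(x,r)\in\Conf_{\Lambda^c}$ with $y\in B(x,r)$ is Poisson of mean
\[
\Intensity_i\int_{\Lambda^c}\Radius_i\bigl([|x-y|,\infty)\bigr)\,dx
\;=\;\Intensity_i\int_0^\infty|B(y,r)\cap\Lambda^c|\,\Radius_i(dr),
\]
and this integral is infinite since $|B(y,r)\cap\Lambda^c|\ge \tfrac12|B(y,r)|$ for $r$ large enough and $\int r^\Dim\Radius_i(dr)=+\infty$. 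So any fixed $y$ is almost surely covered; since $L(\Conf_{\Lambda^c})$ is closed and almost surely contains every point of a countable dense subset of $\R^\Dim$, it equals $\R^\Dim$ almost surely. Applying this along an increasing sequence $\Lambda_n\uparrow\R^\Dim$ and using monotonicity of the complementary configurations upgrades the statement to one valid simultaneously for every bounded $\Lambda$.

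\textbf{DLR step.} Under $\MultiP$, the components $\Conf^j$ with $j\neq i$ are deterministically empty, and the coverage step gives $L(\Conf^i_{\Lambda^c})=\R^\Dim$ $\MultiP$-a.s. In the numerator of $\Specification{\Lambda,\MultiConf_{\Lambda^c}}$, the event $\A$ then forces $\Conf'^j_\Lambda=\emptyset$ for every $j\neq i$, while $\Conf'^i_\Lambda$ is unconstrained because type-$i$ particles do not interact with each other. A direct computation yields
\[
\PartitionFunction{\Lambda,\MultiConf_{\Lambda^c}}=\prod_{j\neq i}e^{-\Intensity_j|\Lambda|}>0,
\]
so \eqref{eq_partition_function_non_deg_def_wr} holds, and $\Specification{\Lambda,\MultiConf_{\Lambda^c}}$ reduces to the product of $\delta_\emptyset$ on every coordinate $j\neq i$ tensored with $\Poisson{\Intensity_i,\Radius_i}_\Lambda$ on coordinate $i$. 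By the independence and restriction properties of Poisson processes this coincides with the conditional distribution of $\MultiConf_\Lambda$ given $\MultiConf_{\Lambda^c}$ under $\MultiP$, giving \eqref{eq_dlr_definition_wr}. Stationarity of $\MultiP$ is inherited from the underlying Poisson process.

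\textbf{Extremality.} The Poisson point process is mixing --- hence ergodic --- under spatial translations, and deterministically empty components do not break ergodicity, so $\MultiP$ is translation-ergodic. The ergodic decomposition of stationary Widom-Rowlinson measures recalled just after Definition \ref{def_WR} (via \cite{georgii_livre}) identifies the translation-ergodic elements of $WR(\MultiParamPoisson)$ with its extremal phases, which concludes the proof. The only step with genuine content is the coverage lemma; everything else is routine bookkeeping.
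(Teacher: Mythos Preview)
Your argument is correct and matches what the paper has in mind. The paper declares the proof ``obvious'' and omits it, but in Section~\ref{section_preuve_theo_existence} it sketches exactly your DLR step: from non-integrability one gets $L(\Conf^i_{\Lambda^c})=\R^\Dim$ almost surely (citing \cite{stoyan_kendall_mecke} rather than computing the Poisson mean as you do), whence the specification forces all other colours to be empty and leaves type $i$ Poisson, so $\Poisson{\bar{\MultiIntensity}^i,\MultiRadius}\in WR(\MultiParamPoisson)$. Your addition of the extremality argument via ergodicity of the Poisson process and the ergodic decomposition after Definition~\ref{def_WR} fills in the one piece the paper leaves entirely implicit.
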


In particular, if $\MultiRadius$ is completely non-integrable (i.e. $\int_{\R^+} r^ \Dim \Radius_i(dr)=+\infty$ for every  $1\le i \le \Types$)  then $WR(\MultiParamPoisson)$ has $\Types$ extremal monochromatic Widom-Rowlinson measures which correspond to the usual $\Types$ ordered phases. 
In the next theorem we prove that, if the activity is small enough, there always exists a polychromatic Widom-Rowlinson measure (without any integrability assumption). 
Therefore, the existence of a $\Types +1$-th extremal phase follows in the completely non-integrable setting.

\begin{theorem}
\label{theo_polychromaticite_cas_extreme_faible activite}
Write $\Intensity_i = \Intensity \Proportion_i$ with $\MultiProportion := (\Proportion_i)_i$ being a discrete probability distribution. Then, in any dimension $d\ge 1$
\begin{enumerate}
\item for every $\MultiProportion$ such that $\max_i \Proportion_i <1$, 
there exists $\Intensity_c^{\MultiProportion}$ such that for all 
$\Intensity<\Intensity_c^{\MultiProportion}$, the set $WR(\Intensity \MultiProportion , \MultiRadius )$ contains at least one polychromatic Widom-Rowlinson measure;
\item 
the constant $\Intensity_c^{\MultiProportion}$ can be chosen uniform in $\MultiProportion$ satisfying for some 
$0< \Proportion_{\max}<1$,
\begin{align}
\label{eq_condition_proportion}
\forall i \in \{1, \dots , \Types \}, \quad
 \Proportion_i \leq \Proportion_{\max}.
\end{align} 
\end{enumerate}
\end{theorem}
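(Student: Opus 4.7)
The strategy is to build a candidate Widom-Rowlinson measure $\MultiP$ as a subsequential weak limit of the stationarized finite-volume measures $\tilde{\MultiP}_n$ with empty boundary condition on a van Hove sequence $\Lambda_n \uparrow \R^\Dim$, and then to rule out monochromaticity through a specific-entropy comparison. Existence of such a limit $\MultiP \in WR(\MultiParamPoisson)$ follows from the scheme underlying Theorem \ref{theo_existence_wr}, and lower semi-continuity of the specific entropy together with the finite-volume identity $H(\tilde{\MultiP}_n|_{\Lambda_n} \mid \MultiPoisson{\MultiParamPoisson}_{\Lambda_n}) = -\log \PartitionFunction{\Lambda_n,\emptyset} + o(|\Lambda_n|)$ gives
$$
\Entropy(\MultiP) \;\le\; \liminf_{n\to\infty} -\frac{\log \PartitionFunction{\Lambda_n,\emptyset}}{|\Lambda_n|}.
$$

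The ordered phases $\MultiP_i := \MultiPoisson{\bar{\MultiIntensity}^i,\MultiRadius}$ of Proposition \ref{proposition_extreme_evident} have specific entropy $\Entropy(\MultiP_i) = \sum_{j\ne i} \Intensity_j = \Intensity(1-\Proportion_i)$, obtained by observing that the $j$-th marginal of $\MultiP_i$ is a Dirac mass at $\emptyset$ whose relative entropy with respect to $\Poisson{\Intensity_j,\Radius_j}_\Lambda$ equals $\Intensity_j|\Lambda|$. Now suppose $\MultiP$ were monochromatic. Its ergodic decomposition consists of monochromatic ergodic WR measures, each of which, by ergodicity, is supported on configurations with a single fixed non-empty color and must coincide with one of the $\MultiP_i$ (in that setting the DLR equations reduce to saying that the underlying single-type Poisson is stationary, which forces the measure to be exactly $\Poisson{\Intensity_i,\Radius_i}$ on color $i$ and $\delta_\emptyset$ elsewhere). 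Consequently $\MultiP = \sum_i \lambda_i \MultiP_i$, and affinity of the specific entropy on stationary measures gives $\Entropy(\MultiP) = \sum_i \lambda_i \Intensity(1-\Proportion_i) \ge \Intensity(1-\Proportion_{\max})$. It therefore suffices to establish the reverse strict inequality for small $\Intensity$.

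The technical core is to lower bound $\PartitionFunction{\Lambda_n,\emptyset}$ despite the possibly unbounded radii. Fix a cutoff $R>0$, let $B_R$ be the event that no ball in $\Lambda_n$ has radius exceeding $R$, and set $\epsilon_R := \max_i \Radius_i((R,\infty))$; then $\MultiPoisson{\MultiParamPoisson}_{\Lambda_n}(B_R) \ge \exp(-|\Lambda_n|\,\Intensity\,\epsilon_R)$, and conditionally on $B_R$ the configuration is a multi-type Poisson of intensities $\tilde{\Intensity}_i = \Intensity_i \Radius_i([0,R]) \le \Intensity_i$ with bounded radii. A standard low-activity cluster expansion for this finite-range hardcore model gives $|\log \MultiPoisson{\MultiParamPoisson}_{\Lambda_n}(\A \mid B_R)|/|\Lambda_n| \le C_R\,\Intensity^2 + o(1)$, so that combining with $\PartitionFunction{\Lambda_n,\emptyset} \ge \MultiPoisson{\MultiParamPoisson}_{\Lambda_n}(\A \cap B_R)$ we obtain
$$
\limsup_{n} -\frac{\log \PartitionFunction{\Lambda_n,\emptyset}}{|\Lambda_n|} \;\le\; \Intensity\,\epsilon_R + C_R\,\Intensity^2.
$$
Choosing $R$ large enough so that $\epsilon_R < (1-\Proportion_{\max})/2$ and then $\Intensity$ small enough that $C_R\,\Intensity < (1-\Proportion_{\max})/2$ yields $\Entropy(\MultiP) < \Intensity(1-\Proportion_{\max})$, contradicting monochromaticity. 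The resulting threshold $\Intensity_c^{\MultiProportion}$ depends only on $\Proportion_{\max}$ and $\MultiRadius$, which yields the uniformity statement (ii).

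\textbf{Main obstacle.} The non-integrability of the $\Radius_i$ forbids any cost-free truncation of the radii, and the central quantitative point is that the truncation cost $\Intensity\,\epsilon_R$ scales \emph{linearly} in $\Intensity$, not as $\Intensity$ times an infinite moment. This linearity is what allows the double choice, first of $R$ large enough to make $\epsilon_R$ small compared to $1-\Proportion_{\max}$, and then of $\Intensity$ small enough to absorb the quadratic cluster-expansion correction; without it the free-boundary measure could not be distinguished from the ordered phases by entropy alone.
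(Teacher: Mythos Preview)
Your overall architecture is right---build the free-boundary accumulation point, bound its specific entropy from above, bound the entropy of any monochromatic competitor from below, and compare---but there is a circularity that breaks the argument as written.

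You assert that the accumulation point $\MultiP$ belongs to $WR(\MultiParamPoisson)$ ``from the scheme underlying Theorem~\ref{theo_existence_wr}''. In the non-integrable setting this is exactly what fails: the localization step (the analogue of Lemma~\ref{lemme_preuve_dlr_ups_total}) relies on the integrability of the radii, and without it one cannot verify the DLR equations for $\MultiP$ directly. The paper is explicit about this: the DLR property is established \emph{after} polychromaticity, by conditioning $\MultiP$ on the event $Poly$ and checking DLR for the conditioned measure. Your argument, however, uses the WR structure of $\MultiP$ in an essential way: your lower bound on the entropy of a monochromatic $\MultiP$ goes through the claim that its ergodic components are monochromatic ergodic \emph{WR} measures and hence must equal one of the $\MultiP_i$. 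Without knowing $\MultiP\in WR$ beforehand, that identification is unavailable, and the bound $\Entropy(\MultiP)\ge \Intensity(1-\Proportion_{\max})$ no longer follows.

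The paper repairs this by proving the stronger statement that \emph{every} stationary monochromatic probability measure $\MultiP^{mono}$ (WR or not) satisfies $\Entropy(\MultiP^{mono})\ge \Intensity(1-\max_i\Proportion_i)$; this follows from the entropy chain rule $\Entropy_{\Lambda_n}(\MultiP^{mono}\mid\MultiPoisson{\MultiParamPoisson}) = \Entropy_{\Lambda_n}(\MultiP^{mono}\mid\Poisson{\bar{\MultiIntensity}^i,\MultiRadius}) + \Entropy_{\Lambda_n}(\Poisson{\bar{\MultiIntensity}^i,\MultiRadius}\mid\MultiPoisson{\MultiParamPoisson})$ and nonnegativity of the first term. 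With that in hand one concludes $\MultiP(Poly)>0$ for the bare accumulation point, and only then conditions on $Poly$ to obtain a genuine polychromatic WR measure. You should reorganize your argument along these lines.

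As a secondary remark, your upper bound on $\Entropy(\MultiP)$ via a radius cutoff followed by a low-activity cluster expansion is a legitimate alternative to the paper's explicit small-cube combinatorics (restricting to configurations that are monochromatic on each sub-cube $\Lambda_m$ with all balls contained in their cube). Both yield a bound of the form $\Intensity\,\epsilon + O(\Intensity^2)$ with $\epsilon$ controllable, and both give a threshold depending only on $\Proportion_{\max}$ and $\MultiRadius$. The paper's route is more elementary (no convergence of the cluster expansion to justify), while yours is perhaps more systematic; either is fine once the circularity above is removed.
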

The sketch of the proof is as follows. We build first an accumulation point of the sequence of finite volume Widom-Rowlinson measures with free boundary condition. 
Then we show, for small activity, that the specific entropy of this measure is smaller than the specific entropy of every monochromatic stationary probability measure. Therefore this measure is not monochromatic and the theorem follows. The details of the proof are given in Section \ref{section_preuve_non_monochromaticite_petite activite}. The assumption $\max_i \Proportion_i <1$ (respectively $\Proportion_{\max}<1$) ensures that at least two of the $\Proportion_i$ are positive.
This is a natural assumption in order to have polychromaticity.

\begin{corollary} We assume that $\MultiProportion := (\Proportion_i)_i$ is a discrete probability with non-null coordinate and that $\MultiRadius$ is completely non integrable (i.e. for every  $1\le i \le q$,  $\int_{\R^+ } r^\Dim \Radius_i(dr) =\infty$). 
Then $WR(\Intensity \MultiProportion , \MultiRadius )$ has at least $\Types+1$ extremal phases for $\Intensity$ small enough.
\end{corollary}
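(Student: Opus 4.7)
The plan is to combine Proposition \ref{proposition_extreme_evident} with Theorem \ref{theo_polychromaticite_cas_extreme_faible activite} through an ergodic decomposition argument. Under the complete non-integrability assumption, Proposition \ref{proposition_extreme_evident} applied to each colour $1\le i\le\Types$ produces $\Types$ extremal phases $\Poisson{\bar{\MultiIntensity}^i,\MultiRadius}$ of $WR(\Intensity\MultiProportion,\MultiRadius)$, each concentrated on configurations whose only non-empty colour coordinate is the $i$-th one. In particular these $\Types$ measures are pairwise distinct and all monochromatic. To finish, I would need to exhibit one additional extremal phase distinct from all of them.

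Since $\MultiProportion$ has non-zero coordinates and $\Types\ge 2$, one has $\max_i \Proportion_i<1$. Theorem \ref{theo_polychromaticite_cas_extreme_faible activite} therefore applies for $\Intensity$ small enough and furnishes a polychromatic Widom-Rowlinson measure $\MultiP^*$, which is stationary by definition of $WR(\MultiParamPoisson)$ but a priori not extremal.

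The key step is to pass from $\MultiP^*$ to an extremal polychromatic phase. As recalled after Definition \ref{def_WR}, the stationary measure $\MultiP^*$ admits an ergodic decomposition whose components are almost surely ergodic, hence extremal, Widom-Rowlinson measures. Because the event $Mono$ is translation invariant (monochromaticity being preserved under spatial shifts), each ergodic component $\MultiP_\theta$ satisfies $\MultiP_\theta(Mono)\in\{0,1\}$; integrating the equality $\MultiP^*(Mono)=0$ against the decomposing measure forces $\MultiP_\theta(Mono)=0$ for almost every $\theta$, i.e. almost every component is polychromatic. Selecting any such component yields an extremal polychromatic Widom-Rowlinson measure $\MultiP^{\text{ext}}$.

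Since $\MultiP^{\text{ext}}$ is polychromatic while the $\Types$ measures of the first paragraph are monochromatic, $\MultiP^{\text{ext}}$ is distinct from each of them, and $WR(\Intensity\MultiProportion,\MultiRadius)$ has at least $\Types+1$ extremal phases. No serious obstacle is expected in executing this plan: the delicate work has already been carried out in Theorem \ref{theo_polychromaticite_cas_extreme_faible activite}, and what remains is only the standard ergodic decomposition together with the triviality of shift-invariant events under ergodic measures.
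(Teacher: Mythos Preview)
Your argument is correct and is precisely the intended derivation: the paper states the corollary without proof because it is an immediate consequence of Proposition~\ref{proposition_extreme_evident} and Theorem~\ref{theo_polychromaticite_cas_extreme_faible activite}, together with the ergodic decomposition recalled after Definition~\ref{def_WR}. Your handling of the passage from the polychromatic measure to an extremal polychromatic phase via the translation invariance of $Mono$ is exactly the standard step the paper leaves implicit.
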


The assumption "$\Intensity$ small" appears in the proof of Theorem \ref{theo_polychromaticite_cas_extreme_faible activite} as an artificial assumption needed to ensure that the accumulation point is not monochromatic.
However it is our belief that this assumption is crucial and that for large activities all Widom-Rowlinson measures would be monochromatic. 
This is formalized in the following conjecture.
\begin{conjecture}
\label{conjecture_monochromaticite_cas_extreme}
In the non-integrable case (i.e. there exists  $1\le i \le q$ such that  $\int_{\R^+ } r^\Dim \Radius_i(dr) =\infty$), for activities $\Intensity$ large enough, every Widom-Rowlinson measures is monochromatic.
\end{conjecture}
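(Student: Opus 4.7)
I attack the conjecture first in the symmetric case $\Intensity_1=\dots=\Intensity_\Types=\Intensity$, $\Radius_1=\dots=\Radius_\Types=\Radius$, via the Fortuin--Kasteleyn representation alluded to in the remark after Theorem~\ref{theo_existence_wr}, which puts $WR(\MultiParamPoisson)$ in correspondence with a continuum random cluster model (CRCM) on $\ConfSpace$ of parameters $(\Intensity,\Radius,\Types)$: a CRCM configuration $\Conf$ is promoted to a WR configuration by colouring each connected component of $L(\Conf)$ independently and uniformly in $\{1,\dots,\Types\}$. Under this bijection, a stationary WR measure is monochromatic if and only if the corresponding CRCM is almost surely supported on configurations whose germ--grain structure has a single connected component. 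The symmetric case of the conjecture thus reduces to showing that, for any non-integrable $\Radius$ and every $\Intensity$ large enough, every stationary CRCM$(\Intensity,\Radius,\Types)$ is concentrated on configurations with $\ncc=1$.

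\textbf{Energy--entropy control of clusters.}
The target heuristic is as follows. A CRCM configuration with $\ncc\ge 2$ in a box of side $n$ must exhibit a macroscopic vacant ``corridor'' separating two clusters. In the non-integrable regime every fixed point of $\R^\Dim$ is almost surely covered by arbitrarily large Poisson balls, so under $\Poisson{\Intensity,\Radius}$ the probability of maintaining such a vacant corridor of surface order $n^{\Dim-1}$ decays like $\exp(-c(\Intensity)\, n^{\Dim-1})$ with $c(\Intensity)\to\infty$ as $\Intensity\to\infty$. The FK weight $\Types^{\ncc}$ favours the creation of more clusters but only through a factor polynomial in the number of clusters, which cannot compete. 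A Peierls-like summation over admissible corridors should then yield, for $\Intensity$ sufficiently large, a uniform bound on the expected number of CRCM clusters intersecting any fixed box; passing to the thermodynamic limit forces $\ncc=1$ almost surely, and hence monochromaticity of every symmetric WR measure. This is the rigorous counterpart of the heuristic sketched in \cite{dereudre_houdebert}.

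\textbf{Non-symmetric extension.}
When at least one $\Radius_i$ is non-integrable but the FK symmetry is lost, I would adapt the directional renewal argument of Section~\ref{section_preuve_conjecture_faible}: non-integrability produces, in every spatial direction, infinitely many enormous balls of the colours $i$ with $\int r^\Dim \Radius_i(dr)=+\infty$, and each such ball forbids all other colours on its support via the hard-core constraint. At large $\Intensity$, these huge monochromatic ``screens'' should be dense enough in space that an ergodic zero--one argument, combined with the stochastic domination of Proposition~\ref{propo_stochastic_dom_poisson_wr} applied colour-by-colour, propagates a single dominant colour globally.

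\textbf{Main obstacle.}
The principal difficulty is the symmetric CRCM step: controlling the probability of a vacant corridor demands estimates that are uniform in the unbounded radii of the balls adjacent to the corridor. A naive Peierls contour summation breaks down because a single huge ball near the corridor can destroy the decoupling between distant portions of the boundary, and the sum over contour geometries is very sensitive to the tail of $\Radius$. One would need a delicate multi-scale truncation of the radii, treating balls of radius $\le R$ by a standard Peierls argument and handling balls of radius $>R$ through a covering/coupling estimate tuned so that the truncation level $R=R(\Intensity,n)$ diverges slowly enough to keep $c(\Intensity)\to\infty$. This is exactly where the heuristic of \cite{dereudre_houdebert} stops short of a proof, and it is the reason Conjecture~\ref{conjecture_monochromaticite_cas_extreme} is only established here in the weak form of Section~\ref{section_preuve_conjecture_faible}.
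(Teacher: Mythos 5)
You have not proved the statement, and neither does the paper: this is an open \emph{conjecture}, and the authors say explicitly that they only establish a weak version (Theorem~\ref{theo_conjecture_preuve_faible}) in the symmetric case, for the free-boundary thermodynamic limit along the anisotropic boxes $\Lambda_n^{(k_n)}$ with $k_n=o(\log n)$, and under the technical conditions \eqref{eq_conjecture_cond1}--\eqref{eq_conjecture_cond2}. Their route contains no contour argument at all: via the Fortuin--Kasteleyn representation they bound the \emph{mean} number of clusters of the finite-volume continuum random cluster measure by $D\,C^{k_n}$ through a one-dimensional renewal argument (the ``number of connected components to the right'' is geometric, Lemmas~\ref{lemme_percolation_segment_a_droite}--\ref{lemme_preuve_conv_geometrique}), deduce that the specific entropy of the limit vanishes, hence that the color-blind limit is the Poisson Boolean model, which covers $\R^\Dim$ by non-integrability; monochromaticity follows. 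So your proposal should be judged as an alternative programme, and as such it has concrete gaps beyond the one you name yourself.

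First, your reduction ``every stationary symmetric WR measure is monochromatic iff every stationary CRCM$(\Intensity,\Radius,\Types)$ has $\ncc=1$ a.s.'' presupposes an infinite-volume FK correspondence: that \emph{every} measure in $WR(\MultiParamPoisson)$ arises as a uniform colouring of some infinite-volume CRCM Gibbs measure. The paper uses the FK representation only at finite volume with free boundary condition, and lifting it to arbitrary DLR solutions is itself a nontrivial step (note also that $\ncc$ is not even well defined as a finite quantity on $\ConfSpace$ in infinite volume; one must work with clusters meeting a window, as the paper implicitly does through $\nccl$-type quantities). Second, your energy--entropy accounting is wrong on a key point: the FK weight $\Types^{\ncc}$ is \emph{exponential}, not polynomial, in the number of clusters, and $\ncc$ can be of volume order $n^{\Dim}$, so the cluster entropy genuinely competes with any surface-order corridor cost $\exp(-c(\Intensity)n^{\Dim-1})$; this is precisely why a naive Peierls bound cannot close, independently of the unbounded-radii decoupling problem you identify. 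Moreover the corridor cost estimate itself is unjustified in the non-integrable regime: the interaction has infinite range and a vacant corridor adjacent to huge balls is not penalized at a uniform surface rate, and the multiscale truncation $R=R(\Intensity,n)$ you invoke is not carried out. Finally, for the non-symmetric case your ``screens'' paragraph is a heuristic with no mechanism for excluding polychromatic DLR solutions (Proposition~\ref{proposition_extreme_evident} already supplies the monochromatic phases; the conjecture is about ruling out all others), and Proposition~\ref{propo_stochastic_dom_poisson_wr} only dominates WR measures \emph{from above} by a Poisson process, which gives no lower bound on the density of the covering balls you need. In short: your plan honestly reproduces the heuristic of \cite{dereudre_houdebert} but, like the paper, stops short of a proof; the paper's actual contribution is the renewal-based weak version, which sidesteps contours entirely at the price of the anisotropic scaling and the symmetric setting.
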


Note that if the conjecture is true, it would imply a phase transition result since the set $ WR( \MultiParamPoisson)$ would have exactly $\Types$ extremal Gibbs measures for $\MultiIntensity$ large and at  least  $\Types+1$ extremal Gibbs measures for $\MultiIntensity$ small. 
Our belief in this conjecture is based on a similar conjecture for the continuum random cluster presented in \cite{dereudre_houdebert} and for which a heuristic proof is given. 
Moreover simulations in the sense of the conjecture have been implemented in \cite[Section III.2]{houdebert_these}. 

We have not succeeded to prove the conjecture but our last result is a first step towards it, by proving a weaker version of the conjecture.

Indeed we show that the symmetric Widom-Rowlinson measure on $ \R^{\Dim}$ with free boundary condition and non-integrable radii is monochromatic if and only the activity $\Intensity$ is large enough. Note that the scale we use in the thermodynamic limit is not symmetric since one direction is favoured. 
Unfortunately  we are not able to extend the result for every thermodynamic limit, in particular, when the volume $\Lambda_n$ is simply an hypercube.

For every $k>0$ and $n\ge 1$, let $\Lambda_n^{(k)}:= ]-n,n] \times [0,k]^{\Dim -1}$ and $\Lambda_n^{(k+)}:= ]0,n] \times [0,k]^{\Dim -1}$. 
Let us fix a sequence $(k_n)_{n\ge 1}$ of positive integers such that $k_n\to +\infty$ and $(k_n)$ is negligible with respect to $(\log(n))_{n\ge 1}$. Now, for any $n\ge 1$,  we consider the Widom-Rowlinson measure on $\Lambda_n^{(k_n)}$ with  free boundary condition

$$\MultiP_n^{\text{free}}(d \MultiConf):= 
\frac{\1_\A(\MultiConf)}{\FinitePartitionFunction}\MultiPoisson{\MultiParamPoisson}_{\Lambda_n^{(k_n)}}(d \MultiConf).$$

As in the proof of Theorem \ref{theo_existence_wr}, we introduce its stationary version $\bar{\MultiP}_n^{\text{free}}$. First $\hat{\MultiP}_n^{\text{free}}:= 
\underset{x \in I_n}{\otimes} 
\MultiP_n^{\text{free}} \circ \tau_x^{-1}$ and finally 
$$\bar{\MultiP}_n^{\text{free}} := \frac{1}{\Leb (\Lambda_n^{(k_n)})}
\int_{\Lambda_n^{(k_n)}} \hat{\MultiP}_n^{\text{free}} \circ \tau_x^{-1} dx,$$
where $I_n:= 2n \Z \times ( k_n \Z)^{\Dim -1}$ and where $\tau_x$ is the translation operator of vector $x\in\R^\Dim$.

As in the proof of Theorem \ref{theo_existence_wr}, it is easy to show that the sequence 
$(\bar{\MultiP}_n^{\text{free}})$ admits at least one accumulation point $\MultiP^{\text{free}}$, with respect to the local convergence topology. The following theorem is our phase transition result involving these accumulation points.

\begin{theorem}
\label{theo_conjecture_preuve_faible}
We assume that we are in the symmetric case 
$\Intensity := \Intensity_1 = \dots = \Intensity_\Types$,
$\Radius := \Radius_1= \dots = \Radius_{\Types}$ in dimension $d\ge 1$,
with $\Radius$ satisfying the two following conditions
\begin{subequations}
\label{eq_conjecture_conditions}
\begin{align} 
\label{eq_conjecture_cond1}
\int_1^\infty \exp \left(
 - \int_1^u \Radius (]r,\infty[) dr
\right) du < \infty;
\end{align}
\begin{align}
\label{eq_conjecture_cond2}
\Radius (\{0\})<\frac{1}{\Types}.
\end{align}
\end{subequations}

Then, for $\Intensity$ large enough, $(\bar{\MultiP}_n^{\text{free}})$ converges (without passing by a subsequence) to  $\MultiP^{\text{free}}$ which is the mixture $\sum_{i=1}^q \frac{1}{q} \MultiPoisson{\bar{\MultiIntensity}^i ,\MultiRadius}$, with $\bar{\MultiIntensity}^i=(0 \dots, 0, \Intensity_i,0,\dots,0)$
 (i.e. $\MultiP^{\text{free}}$ is monochromatic with equal probability of having any color). In opposite, when $z$ is small enough, every accumulation point of  $(\bar{\MultiP}_n^{\text{free}})$ is not monochromatic and therefore it is not a mixture of the  monochromatic phase $\MultiPoisson{\bar{\MultiIntensity}^i ,\MultiRadius}$, $1\le i\le q$.
\end{theorem}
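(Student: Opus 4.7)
The two regimes require entirely different arguments; the large-activity monochromatic-mixing statement is the main content and rests on a combination of the symmetric Fortuin--Kasteleyn representation with a renewal argument along the privileged direction $x_1$.

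\textbf{Small $\Intensity$.} I would adapt the specific-entropy strategy of Theorem \ref{theo_polychromaticite_cas_extreme_faible activite}. Let $\MultiP^{\text{free}}$ be any accumulation point of $(\bar{\MultiP}_n^{\text{free}})$. A first-order expansion of $-|\Lambda_n^{(k_n)}|^{-1}\log\FinitePartitionFunction$ in the activity shows that the specific entropy of $\MultiP^{\text{free}}$ with respect to $\MultiPoisson{\MultiParamPoisson}$ is bounded by $C\Intensity^2$, whereas any stationary monochromatic probability measure has specific entropy at least $(\Types-1)\Intensity$, since it suppresses $\Types-1$ of the color Poisson intensities. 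For $\Intensity$ small enough the inequality $C\Intensity^2<(\Types-1)\Intensity$ rules out monochromaticity of every accumulation point.

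\textbf{Large $\Intensity$.} The Fortuin--Kasteleyn representation factors $\MultiP_n^{\text{free}}$ into (i) sampling an uncolored $\omega$ from the free-boundary Continuum Random Cluster Model $\mu_n^{\text{CRCM}}(d\omega)\propto \Types^{\nccl(\omega)}\Poisson{\Intensity,\Radius}_{\Lambda_n^{(k_n)}}(d\omega)$, then (ii) coloring each connected component of the germ-grain $L(\omega)$ independently and uniformly in $\{1,\ldots,\Types\}$. I would prove the geometric claim that, for $\Intensity$ large enough and every fixed bounded window $\Lambda$, with $\mu_n^{\text{CRCM}}$-probability tending to $1$ every ball of $\omega$ intersecting $\Lambda$ belongs to a single connected component of $L(\omega_{\Lambda_n^{(k_n)}})$. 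The construction uses \emph{spanning balls} of radius at least $\sqrt{\Dim-1}\,k_n$, which cover the entire cross-section $[0,k_n]^{\Dim-1}$; two such balls whose $x_1$-projections overlap are Boolean-connected automatically, and any non-spanning ball intersecting a spanned slab joins this component. Assumption \eqref{eq_conjecture_cond2} guarantees $\Radius([\sqrt{\Dim-1}\,k_n,\infty))>0$, so for $\Intensity$ large the linear intensity of spanning balls along $x_1$ can be made arbitrarily large; assumption \eqref{eq_conjecture_cond1} is the continuous Shepp-type criterion forcing the inter-spanning-ball gap distribution to have summable Laplace tail, whence a standard renewal estimate bounds the longest gap in $]-n,n]$ by $o(n)$ with probability tending to $1$. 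Conditioned on the resulting giant-cluster event, step (ii) colors the whole of $\Lambda$ with a single uniformly chosen color, so that for every bounded local $f$ supported in $\Lambda$,
\begin{equation*}
\mathbb{E}_{\MultiP_n^{\text{free}}}\bigl[f(\MultiConf_\Lambda)\bigr]\;\xrightarrow[n\to\infty]{}\;\frac{1}{\Types}\sum_{i=1}^{\Types}\mathbb{E}_{\MultiPoisson{\bar{\MultiIntensity}^i,\MultiRadius}}\bigl[f(\MultiConf_\Lambda)\bigr],
\end{equation*}
since conditionally on color $i$ being the only one present, the residual configuration is an unconstrained color-$i$ Poisson process. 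The averaging in the stationarization transfers the same limit to $\bar{\MultiP}_n^{\text{free}}$, and the uniqueness of the identified mixture removes the need to pass to a subsequence.

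\textbf{Main obstacle.} The delicate point is the cluster weight $\Types^{\nccl}$ in $\mu_n^{\text{CRCM}}$, which biases towards many small components and so fights the construction of a single giant cluster. I would handle this by combining an FKG-type stochastic comparison (showing that $\mu_n^{\text{CRCM}}$ dominates a Boolean model with effective activity proportional to $\Intensity$ on increasing connectivity events) with a quantitative renewal estimate on the spanning-ball chain whose gap tail is controlled by \eqref{eq_conjecture_cond1}. The asymmetric scaling $k_n=o(\log n)$ plays an essential role here: it is slow enough that spanning the cross-section remains a macroscopic event with positive linear density, yet fast enough that the entropic contribution of $\nccl$ coming from the cross-section stays subleading relative to the $\Intensity$-driven renewal chain along $x_1$.
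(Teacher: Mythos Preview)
Your small-activity paragraph is fine and matches the paper's entropy discrimination from Section~5.

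For large activity you have identified the right ingredients (Fortuin--Kasteleyn representation, renewal along the $x_1$-axis, role of the scaling $k_n=o(\log n)$), but there is a genuine gap in how you propose to overcome the ``main obstacle''. You want to show that, under the Continuum Random Cluster measure $\mu_n^{\text{CRCM}}$ itself, the probability of a single giant component tends to~$1$, and you say you would obtain this from an FKG-type comparison showing that $\mu_n^{\text{CRCM}}$ stochastically dominates a Boolean model of intensity proportional to $z$ on increasing connectivity events. This domination is not available in the unbounded-radii setting: the Papangelou intensity of the CRCM at a marked point $(x,r)$ given $\omega$ equals $z\,\Types^{1-k}$, where $k$ is the number of clusters of $L(\omega)$ touched by $B(x,r)$, and with unbounded radii $k$ is not uniformly bounded. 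Hence there is no positive lower Poisson intensity that $\mu_n^{\text{CRCM}}$ dominates, and the $\Types^{\ncc}$ weight cannot be removed by a one-line FKG comparison. (The only easy comparison goes the wrong way: $\mu_n^{\text{CRCM}}\preceq \Poisson{z\Types,\Radius}$, which does not help for lower-bounding connectivity.)

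The paper circumvents exactly this difficulty by \emph{not} proving a giant-component statement under $\mu_n^{\text{CRCM}}$ at all. Instead it uses the trivial bound $Z_n^{(k_n)}\ge 1$ to transfer the problem entirely to the Poisson measure,
\[
\int \ncc \, dP_n^{\text{free}} \;\le\; \int \ncc\,\Types^{\ncc}\, d\Poisson{z,\Radius}_{\Lambda_n^{(k_n)}}\;\le\; D\int \Typesbis^{\ncc}\, d\Poisson{z,\Radius}_{\Lambda_n^{(k_n)}},
\]
and then carries out the renewal argument \emph{under Poisson}, where it is clean: subadditivity reduces to a fixed thin slab of width $k$, a ``connected components to the right'' device makes the count geometric, and conditions~\eqref{eq_conjecture_cond1}--\eqref{eq_conjecture_cond2} force the geometric parameter $p(z)$ above $1-1/\Typesbis$ for large $z$. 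This yields the bound $\int \ncc\,dP_n^{\text{free}}\le D\,C^{k_n}$, from which the specific entropy of $\bar{P}_n^{\text{free}}$ is seen to vanish thanks to $k_n=o(\log n)$; lower semicontinuity then forces the color-blind limit to equal $\Poisson{z,\Radius}$, and monochromaticity follows because the non-integrable Boolean model covers $\R^\Dim$. So the paper's route is: bound the \emph{expected} number of components (not the probability of a giant cluster), work under Poisson rather than CRCM, and conclude via specific entropy rather than direct local convergence. Your plan would need a substitute for the missing lower domination to be made rigorous.
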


Let us note that the assumptions \eqref{eq_conjecture_cond1} and \eqref{eq_conjecture_cond2} are purely technical and probably not really necessary. Indeed, they are related to the renewal strategy we used to proof the first part of the Theorem. We believe that the assumption \eqref{eq_conjecture_cond1} could  be replaced by $\int_{\R^+ } r^\Dim  Q(dr) =\infty$ and the assumption \eqref{eq_conjecture_cond2} by $\Radius (\{0\})<1$.

The proof of the theorem is based on the Fortuin-Kasteleyn representation in order to transfer the problem from the Widom-Rowlinson setting to the Continuum Random Cluster setting.
We will prove that the sequence of finite volume Continuum Random Cluster measures converges towards the Poisson Boolean model which covers the whole space $\R^\Dim$, proving that $\MultiP^{\text{free}}$ is monochromatic with equal probability for each color by symmetry of the model.
This is done by bounding the mean number of connected components of a Continuum Random Cluster measure, using a fine renewal argument.
The detailed proof is given in Section \ref{section_preuve_conjecture_faible}.

\begin{remark}

In the case of dimension $\Dim=1$, the sequence $(k_n)_n$ plays no role and the probability measures
$\MultiP_n^{\text{free}}$,
$\hat{\MultiP}_n^{\text{free}}$ and 
$\bar{\MultiP}_n^{\text{free}}$ are just the measures 
$\MultiP_n$, $\hat{\MultiP}_n$ and  $\bar{\MultiP}_n$ 
which will be introduced in the proof of Thereom \ref{theo_existence_wr} 
and Theorem \ref{theo_polychromaticite_cas_extreme_faible activite}, see Section \ref{section_preuve_theo_existence} and Section \ref{section_preuve_non_monochromaticite_petite activite}. Let us note also that the Widom-Rowlinson model with non-integrable radii exhibits a phase transition in dimension one. It is quite unusual and due to the very long range of the interaction. 
\end{remark}

\section{Proof of Theorem \ref{theo_existence_wr}} 
\label{section_preuve_theo_existence}
First let us consider the extreme case where at least one radius measure, let say $\Radius_i$, satisfy 
$\int_{\R^+} r^\Dim \Radius_i(dr)=\infty$.
It is known, see \cite{stoyan_kendall_mecke} for instance, that the Poisson Boolean model covers almost surely the all space $\R^\Dim$.
In particular for every bounded $\Lambda \subseteq \R^\Dim$ and $\Poisson{\Intensity_i,\Radius_i}$-almost every configuration $\Conf$, we have 
$L(\Conf_{\Lambda^c})=\R^\Dim$.
Let us consider the probability measure on $\MultiConfSpace$ with one marginal being $\Poisson{\Intensity_i,\Radius_i}$ and the others producing almost surely empty configurations.
This is a Poisson Point process $\Poisson{\bar{\MultiIntensity}^i , \MultiRadius}$
 with only one non-zero intensity $\Intensity_i$, i.e.  $\bar{\MultiIntensity}^i:= (0, \dots ,0, \Intensity_i, 0, \dots ,0)$.
This probability measure trivially satisfies conditions \eqref{eq_def_wr} of Definition \ref{def_WR} and is therefore a Widom-Rowlinson measure of parameters 
 $\MultiIntensity$ and $\MultiRadius$.

So from now on we consider the case of $\MultiRadius$ satisfying the integrability assumption \eqref{eq_integrability_assumption}.

\subsection{Construction of a good cluster point}
To build a Widom-Rowlinson measure, consider  a sequence of Widom-Rowlinson measures on  the bounded boxes $\Lambda_n$ with free boundary condition defined as
\begin{align}
\MultiP_n(d\MultiConf) 
=
\Specification{\Lambda_n, \emptyset}(d\MultiConf) 
= \frac{\1_{\A}(\MultiConf) }{\FinitePartitionFunction{}} 
\MultiPoisson{\MultiParamPoisson}_{\Lambda_n}(d\MultiConf_{\Lambda_n}),
\nonumber
\end{align}
with $\Lambda_n :=]-n,n]^\Dim$ and 
$\FinitePartitionFunction{}= \int_{\MultiConfSpace} \1_{\A}(\MultiConf)
\MultiPoisson{\MultiParamPoisson}_{\Lambda_n}(d\MultiConf_{\Lambda_n})$.
Then consider 
$\hat{\MultiP}_n = \underset{i \in 2n \Z^d}{\otimes} \MultiP_n \circ \tau_{i}^{-1}$ 
and 
$\bar{\MultiP}_n=\frac{1}{\Leb(\Lambda_n)}\int_{\Lambda_n} 
( \hat{\MultiP}_n \circ \tau_x^{-1} ) dx$,
where $\tau_x$ is the translation operator of vector $x\in \R^\Dim$.
The measures $\bar{\MultiP}_n$ are by construction stationary.
The aim is to find an accumulation point for the sequence $(\MultiP_n)$ with respect to the local convergence topology defined in the next definition.
\begin{definition}
A measurable function $f: \MultiConfSpace \to \R$ is said to be \emph{local} if there exists a bounded $\Lambda \subseteq \R^\Dim$ such that $f(\MultiConf)=f(\MultiConf_{\Lambda})$ for all $\MultiConf \in \MultiConfSpace$.

A sequence $(\tilde{\MultiP}_n)$ converges, with respect to the local convergence topology, towards $\tilde{\MultiP}$, if for every bounded local function $f$ we have
\begin{align*}
\int_{\MultiConfSpace} f(\MultiConf) \tilde{\MultiP}_n(d\MultiConf) 
\underset{n \to \infty}{\longrightarrow}
\int_{\MultiConfSpace} f(\MultiConf) \tilde{\MultiP}(d\MultiConf).
\end{align*}
\end{definition}
A very convenient tool for proving the existence of an accumulation point is the specific entropy.
It was introduced in \cite{georgii_livre} and is defined in the following definition.
\begin{definition}
For a stationary probability measure $\MultiP$ we define the specific entropy of $\MultiP$, written $\Entropy (\MultiP)$, as the following limit:
\begin{align}
\label{eq_specific_entropy_def}
\Entropy (\MultiP)
=
\lim_{n \to \infty} \
\frac{1}{\Leb (\Lambda_n)} 
\Entropy_{\Lambda_n}(\MultiP | \MultiPoisson{\MultiParamPoisson}),
\end{align}
with $\Entropy_{\Lambda_n}(\MultiP | \MultiPoisson{\MultiParamPoisson})$ being the relative entropy of $\MultiP$, with respect to $\MultiPoisson{\MultiParamPoisson}$, defined as
\begin{align}
\Entropy_{\Lambda_n}(\MultiP | \MultiPoisson{\MultiParamPoisson})=
\left\lbrace
\begin{array}{ccc}
\int_{\MultiConfSpace} g \ \log (g) \ d\MultiPoisson{\MultiParamPoisson}_{\Lambda_n}  & \mbox{if}& 
\MultiP_{\Lambda_n} \ll \MultiPoisson{\MultiParamPoisson}_{\Lambda_n}, 
\ g=\frac{d\MultiP_{\Lambda_n}}{d\MultiPoisson{\MultiParamPoisson}_{\Lambda_n}}
\\
+ \infty & \mbox{else} &
\end{array}\right. .
\nonumber
\end{align}
\end{definition}
The stationarity ensures the  convergence of the limit in \eqref{eq_specific_entropy_def}.
This is the reason why we introduce a stationary version $\bar{\MultiP}_n$ of the finite volume Widom-Rowlinson measure $\MultiP_n$. The following proposition ensures the compactness of the level sets of the specific entropy.
\begin{proposition}[Proposition 2.6 \cite{georgii_zessin}] 
\label{propo_compactness_entropy_specifique}
With respect to the local convergence topology induced on the stationary probability measures on $\MultiConfSpace$, we have
\begin{enumerate}
\item $\Entropy$ is affine;
\item $\Entropy$ is lower semi-continuous;
\item the set $ \{ \MultiP$ stationary, $ \Entropy (\MultiP) \leq C \}$ is compact for every positive real number $C$.
\end{enumerate}
\end{proposition}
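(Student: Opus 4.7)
The plan is to reduce all three claims to finite-volume properties of the relative entropy $\Entropy_{\Lambda_n}(\cdot \mid \MultiPoisson{\MultiParamPoisson})$ and then pass to the thermodynamic limit, using only the product structure of $\MultiPoisson{\MultiParamPoisson}$ on disjoint bounded sets together with the stationarity of the measures involved. The first step I would carry out is to upgrade the defining limit to a supremum,
\begin{align*}
\Entropy(\MultiP) \;=\; \sup_{n \geq 1} \frac{1}{\Leb(\Lambda_n)} \, \Entropy_{\Lambda_n}(\MultiP \mid \MultiPoisson{\MultiParamPoisson}),
\end{align*}
via a Fekete-type argument: tiling $\Lambda_{kn}$ by translates of $\Lambda_n$, the factorisation of $\MultiPoisson{\MultiParamPoisson}$ over the tiles together with stationarity of $\MultiP$ yield the super-additivity $\Entropy_{\Lambda_{kn}}(\MultiP \mid \MultiPoisson{\MultiParamPoisson}) \geq k^\Dim \, \Entropy_{\Lambda_n}(\MultiP \mid \MultiPoisson{\MultiParamPoisson})$ modulo boundary contributions that vanish after division by $\Leb(\Lambda_{kn})$.

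For (ii), lower semi-continuity, I would invoke the Donsker-Varadhan variational formula
\begin{align*}
\Entropy_{\Lambda_n}(\MultiP \mid \MultiPoisson{\MultiParamPoisson}) \;=\; \sup_f \left\{ \int f \, d\MultiP - \log \int e^f \, d\MultiPoisson{\MultiParamPoisson} \right\},
\end{align*}
where $f$ ranges over bounded $\MultiSigmaAlgebra_{\Lambda_n}$-measurable functions. Each functional $\MultiP \mapsto \int f \, d\MultiP - \log \int e^f \, d\MultiPoisson{\MultiParamPoisson}$ is continuous in the local topology since $f$ is bounded and local, so $\Entropy_{\Lambda_n}(\cdot \mid \MultiPoisson{\MultiParamPoisson})$ is LSC as a supremum of continuous functionals, and $\Entropy$ inherits LSC from the sup-representation obtained above. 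For (i), affineness, the convexity of $x \log x$ gives the upper bound $\Entropy(\lambda \MultiP_1 + (1-\lambda)\MultiP_2) \leq \lambda \Entropy(\MultiP_1) + (1-\lambda) \Entropy(\MultiP_2)$ already at each finite volume; the reverse inequality follows from the elementary mixture bound $\Entropy_{\Lambda_n}(\lambda \MultiP_1 + (1-\lambda)\MultiP_2 \mid \MultiPoisson{\MultiParamPoisson}) \geq \lambda \Entropy_{\Lambda_n}(\MultiP_1 \mid \MultiPoisson{\MultiParamPoisson}) + (1-\lambda) \Entropy_{\Lambda_n}(\MultiP_2 \mid \MultiPoisson{\MultiParamPoisson}) - \log 2$, whose $\log 2$ washes out in the thermodynamic limit.

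For (iii), closedness of the level set $\{\MultiP : \Entropy(\MultiP) \leq C\}$ is immediate from (ii). Relative compactness in the local topology reduces to tightness of the marked counting variables $\MultiConf_\Lambda$ under the family, for each bounded $\Lambda$. On the level set, the sup-representation bounds $\Entropy_{\Lambda_n}(\MultiP \mid \MultiPoisson{\MultiParamPoisson}) \leq C \, \Leb(\Lambda_n)$, and the classical entropy inequality
\begin{align*}
\MultiP(A) \;\leq\; \frac{\log 2 + \Entropy_{\Lambda_n}(\MultiP \mid \MultiPoisson{\MultiParamPoisson})}{\log(1 + 1/\MultiPoisson{\MultiParamPoisson}_{\Lambda_n}(A))}
\end{align*}
applied to the events $A = \{ |\MultiConf_\Lambda| > M \}$ and $A = \{\MultiConf_\Lambda \text{ carries a mark larger than } R\}$ transfers the exponentially small Poisson tails to $\MultiP$ uniformly over the level set; a diagonal extraction over a countable basis of local events then produces the cluster point, whose stationarity is preserved as a closed condition in the same topology.

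The step I expect to be the main obstacle is this last one. Because the mark space $\R^+$ is not compact and the distributions $\MultiRadius$ may have unbounded support, tightness demands joint control over both the number of points and the sizes of their marks in every window, and assembling the two tail bounds coming from the entropy inequality into actual relative compactness in the local topology on marked configurations requires some care; everything downstream is then a mechanical consequence of the sup-representation established in the very first step.
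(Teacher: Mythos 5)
The paper does not prove this proposition at all: it is quoted verbatim as Proposition 2.6 of Georgii--Zessin, so your attempt can only be measured against the argument in that reference. On parts (i) and (ii) your route is essentially theirs and is correct. Two small remarks: the superadditivity $\Entropy_{\Lambda_{kn}}(\MultiP \mid \MultiPoisson{\MultiParamPoisson}) \geq k^\Dim\, \Entropy_{\Lambda_n}(\MultiP \mid \MultiPoisson{\MultiParamPoisson})$ is \emph{exact}, with no boundary corrections whatsoever, because $\MultiPoisson{\MultiParamPoisson}$ is a product measure over disjoint regions and $\Lambda_{kn} = ]-kn,kn]^\Dim$ tiles exactly into $k^\Dim$ translates of $\Lambda_n$ (the error term you hedge against does not exist); and your Donsker--Varadhan step works precisely because the local convergence topology used here is generated by bounded \emph{measurable} local functions, so each functional $\MultiP \mapsto \int f\, d\MultiP - \log \int e^f d\MultiPoisson{\MultiParamPoisson}$ is genuinely continuous. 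The $\log 2$ mixture bound for affineness is also correct as stated.

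The genuine gap is in (iii), and it is exactly where you flagged trouble. Tightness of the counting variables and of the mark sizes controls relative compactness only for the \emph{weak} topology, whose test functions are continuous; here the topology tests against bounded measurable local functions, and for it tightness plus setwise convergence along a countable basis of local events is not enough: a setwise limit on a countable algebra need not extend to convergence of $\int f \, d\MultiP_k$ for every bounded local measurable $f$, nor need the limiting set function be countably additive, without uniform control of the local densities. The repair is available from the very inequality you already invoke, provided you apply it to \emph{arbitrary} local events $A$ rather than only to the two tail families: on the level set $\{\Entropy \leq C\}$ the sup-representation gives $\Entropy_{\Lambda_n}(\MultiP \mid \MultiPoisson{\MultiParamPoisson}) \leq C \Leb(\Lambda_n)$, so
\begin{align*}
\MultiP(A) \;\leq\; \frac{\log 2 + C\,\Leb(\Lambda_n)}{\log\bigl(1 + 1/\MultiPoisson{\MultiParamPoisson}_{\Lambda_n}(A)\bigr)}
\end{align*}
shows that $\MultiPoisson{\MultiParamPoisson}_{\Lambda_n}(A) \to 0$ forces $\MultiP(A) \to 0$ uniformly over the level set, i.e.\ uniform absolute continuity of the restrictions $\MultiP_{\Lambda_n}$ with respect to $\MultiPoisson{\MultiParamPoisson}_{\Lambda_n}$. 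Equivalently (de la Vall\'ee-Poussin, since bounded entropy already gives uniform integrability of the densities directly), the local densities form a relatively weakly compact subset of $L^1(\MultiPoisson{\MultiParamPoisson}_{\Lambda_n})$ by Dunford--Pettis. A diagonal extraction over $n$ of weak-$L^1$ limits then yields a consistent family of densities, hence a limiting probability measure, together with convergence of $\int f\, d\MultiP_k$ for \emph{all} bounded local measurable $f$ in one stroke; stationarity passes to the limit as you say, and the non-compactness of the mark space $\R^+$, which worried you, becomes irrelevant because the densities live on the full local marked configuration space and no separate control of the marks is needed. This Dunford--Pettis step is the actual engine of the Georgii--Zessin proof and is what your tightness-plus-basis extraction must be replaced by.
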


\begin{proposition}
\label{propo_borne_entropi_spécifique_wr}
For all $n$ we have
\begin{align*}
\Entropy (\bar{\MultiP}_n)
\leq \Intensity_1 + \dots + \Intensity_\Types.
\end{align*}
\end{proposition}
\begin{proof}
First using the fact that the specific entropy is affine, see Proposition \ref{propo_compactness_entropy_specifique}, we have
\begin{align}
\Entropy(\bar{\MultiP}_n)
= \frac{1}{\Leb (\Lambda_n)}
\Entropy_{\Lambda_n}(\MultiP_n|\MultiPoisson{\MultiParamPoisson}).
\nonumber
\end{align}
Now using the definition of the specific entropy and standard bounds on the partition function, we get
\begin{align}
\Entropy_{\Lambda_n}(\MultiP_n|\MultiPoisson{\MultiParamPoisson})
= -\log (\FinitePartitionFunction{})
\leq (\Intensity_1 + \dots + \Intensity_\Types) \Leb (\Lambda_n),
\nonumber
\end{align}
which leads to the expected result.
\end{proof}
Using Proposition \ref{propo_compactness_entropy_specifique} and Proposition \ref{propo_borne_entropi_spécifique_wr}, we obtain the existence of an accumulation point $\MultiP$ of the sequence $(\bar{\MultiP}_n)$.
For the rest of the proof we are, for convenience of notation, omitting to take a subsequence when taking $n$ go to infinity.
We now have a good candidate for being a Widom-Rowlinson measure.

Then we have to prove equations \eqref{eq_def_wr}.
This is done for the symmetric case in the PhD manuscript \cite{houdebert_these} and we are here adapting the proof to the non-symmetric case.

In the next proposition we prove that the measure $\MultiP$ produces almost surely authorized configuration.
This trivially implies that condition \eqref{eq_partition_function_non_deg_def_wr} is fulfilled.
\begin{proposition}
\label{propo_candidat_non_degenere}
We have $\MultiP(\A)=1$, and therefore
\begin{align}
\PartitionFunction{\Lambda,\MultiConf_{\Lambda^c}} \geq 
\exp(-\Intensity \Leb(\Lambda) ) 
\ \MultiP(d \MultiConf) - \text{almost surely}.
\nonumber
\end{align}
\end{proposition}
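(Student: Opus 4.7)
The plan is first to show $\MultiP(\A)=1$, from which the partition-function bound follows immediately. The central obstacle is that $\A$ is not a local event, because the radii need not be bounded, so local convergence cannot be applied to $\1_\A$ directly.

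I would bypass this by a truncation. For every bounded box $\Delta\subseteq\R^\Dim$ and every $R>0$, let $E_{\Delta,R}$ denote the event that there exist $(x,r)\in\Conf^i$ and $(y,s)\in\Conf^j$ with $i\ne j$, $x\in\Delta$, $r,s\le R$ and $|x-y|\le r+s$. This event depends only on $\MultiConf_{\Delta+B(0,2R)}$, hence is a bounded local function, and it is increasing. Moreover $\A^c=\bigcup_{\Delta,R}E_{\Delta,R}$ over a countable cover. It therefore suffices to prove $\bar{\MultiP}_n(E_{\Delta,R})\to 0$ for each such pair, since the local convergence will then yield $\MultiP(E_{\Delta,R})=\lim_n\bar{\MultiP}_n(E_{\Delta,R})=0$, and a union bound gives $\MultiP(\A)=1$.

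To control $\bar{\MultiP}_n(E_{\Delta,R})$, I would exploit the structure of the averaged measure. Under $\hat{\MultiP}_n\circ\tau_x^{-1}$, the configuration is an independent superposition over the cells of the shifted grid $2n\Z^\Dim+x$, each cell being distributed as a translate of $\MultiP_n$, which by construction forbids bad pairs with both centres in the same cell. Consequently, realising $E_{\Delta,R}$ forces the bounded set $K:=\Delta+B(0,2R)$ to be cut by some face of the shifted grid. A direct box-versus-hyperplane estimate shows that the set of shifts $x\in\Lambda_n$ for which this happens has Lebesgue measure of order $n^{\Dim-1}$, hence relative measure $O(1/n)$. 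On the complementary set the probability vanishes, while on the bad set I combine the fact that $E_{\Delta,R}$ is increasing with Proposition \ref{propo_stochastic_dom_poisson_wr} and the stationarity of $\MultiPoisson{\MultiParamPoisson}$ to bound the probability by $\MultiPoisson{\MultiParamPoisson}(E_{\Delta,R})$, which is finite (at most the expected number of overlapping Poisson pairs with bounded radii and one centre in $\Delta$). Averaging over $x\in\Lambda_n$ gives $\bar{\MultiP}_n(E_{\Delta,R})=O(1/n)\to 0$; this is the only quantitative step, and it is where I expect the main difficulty, namely matching the non-locality of $\A$ with the independent-gluing structure of $\bar{\MultiP}_n$.

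Once $\MultiP(\A)=1$ is established, the inequality on the partition function is free of cost. For $\MultiP$-a.e.\ $\MultiConf$ the outside piece $\MultiConf_{\Lambda^c}$ is allowed, so the empty configuration on $\Lambda$ is an admissible competitor in the integral defining $\PartitionFunction{\Lambda,\MultiConf_{\Lambda^c}}$, giving
$$\PartitionFunction{\Lambda,\MultiConf_{\Lambda^c}}\ \ge\ \MultiPoisson{\MultiParamPoisson}_\Lambda(\{\emptyset\})\ =\ \exp\!\bigl(-(\Intensity_1+\cdots+\Intensity_\Types)\,\Leb(\Lambda)\bigr),$$
which is the stated bound (with $z=\Intensity_1+\cdots+\Intensity_\Types$).
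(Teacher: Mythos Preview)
Your proof is correct and rests on the same core idea as the paper: localize the non-local event, then observe that for all but an $O(1/n)$ fraction of shifts $x\in\Lambda_n$ the localizing window lies inside a single cell of the periodized construction, where the configuration is automatically authorized. The paper carries this out slightly more directly: instead of decomposing $\A^c$ into the truncated events $E_{\Delta,R}$, it simply uses the pointwise convergence $\1_\A(\MultiConf_{\Lambda_k})\to\1_\A(\MultiConf)$ and computes $\int\1_\A(\MultiConf_{\Lambda_k})\,d\bar{\MultiP}_n\ge\Leb([k-n,n-k]^\Dim)/\Leb(\Lambda_n)\to1$, which avoids both the radius truncation and the appeal to stochastic domination (on your ``bad'' set of shifts one can just bound by $1$). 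Your route works, but the paper's is shorter.
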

\begin{proof}
The event $\A$ is not local and we cannot use directly the local convergence.
But this event could be called "almost local" since for every configuration $\MultiConf$,
\begin{align}
\1_{\A}(\MultiConf_{\Lambda_k})
\underset{k \to \infty}{\longrightarrow}
\1_{\A}(\MultiConf).
\nonumber
\end{align}
Therefore we have
\begin{align}
\MultiP(\A) &= 
\int_{\MultiConfSpace} \1_{\A}(\MultiConf) \MultiP(d \MultiConf)
= \nonumber
\underset{k \to \infty}{\lim}\int_{\MultiConfSpace} 
\1_{\A}(\MultiConf_{\Lambda_k}) \MultiP(d \MultiConf)
\\ & =
\underset{k \to \infty}{\lim} \underset{n \to \infty}{\lim}
\int_{\MultiConfSpace} 
\1_{\A}(\MultiConf_{\Lambda_k}) \bar{\MultiP}_n(d \MultiConf)
\nonumber
\end{align}
with
\begin{align}
\int_{\MultiConfSpace} 
\1_{\A}(\MultiConf_{\Lambda_k}) \bar{\MultiP}_n(d \MultiConf)
& =
\frac{1}{\Leb (\Lambda_n)} \int_{\Lambda_n}
\int_{\MultiConfSpace} 
\1_{\A}(\MultiConf_{\Lambda_k}) \hat{\MultiP}_n \circ \tau_x ^{-1}(d \MultiConf) dx  
\nonumber \\ & =
\frac{1}{\Leb (\Lambda_n)} \int_{\Lambda_n}
\int_{\MultiConfSpace} 
\1_{\A} (\MultiConf_{\tau_x (\Lambda_k)}) \hat{\MultiP}_n (d \MultiConf) dx .
\nonumber
\end{align}
For $n >k$, we have $\tau_x(\Lambda_k) \subseteq \Lambda _n$ as soon as $x \in [k-n,n-k]^d$ and so
\begin{align}
\frac{1}{\Leb (\Lambda_n)}
\int_{\Lambda_n} \int_{\MultiConfSpace} 
\1_{\A}(\MultiConf_{\tau_x (\Lambda_k)}) \hat{\MultiP}_n(d \MultiConf)dx
 \geq
 \frac{\Leb ([k-n,n-k]^d)}{\Leb (\Lambda_n)},
\nonumber
\end{align}
which tends to $1$ as $n$ goes to infinity.
The result is proved.
\end{proof}

\subsection{The cluster point statisfies \eqref{eq_dlr_definition_wr}}
Starting now we fix $\Lambda \subseteq \R^\Dim$.
We are going to prove that $\MultiP$ satisfies the DLR($\Lambda$) equation.
To do so we need to modify the sequence $(\bar{\MultiP}_n)$, defining a new sequence $(\tilde{\MultiP}_n^{\Lambda})$.
This new sequence will be asymptotically equivalent to the former one and each 
$\tilde{\MultiP}_n^{\Lambda}$ will satisfy the DLR($\Lambda$) equation \eqref{eq_dlr_definition_wr}.
Finally by considering good "localizing" events, we will be able to pass the DLR property through the limit.

Consider
\begin{align}
\tilde{\MultiP}_n^{\Lambda}=
\frac{1}{\Leb(\Lambda_n)}\int_{\Lambda_n} 
\1_{\Lambda \subseteq \tau_x(\Lambda_n)} \times ( \MultiP_n \circ \tau_x^{-1} ) dx.
\nonumber
\end{align}
The measures $\tilde{\MultiP}_n^{\Lambda}$ are not probability measures but satisfy good properties, see the following proposition.
\begin{proposition}
\label{propo_conv_local_et_dlr}
For each bounded local function $f : \MultiConfSpace \to \R$ we have
\begin{align}
\left|  \int_{\MultiConfSpace} f \ d\tilde{\MultiP}_n^{\Lambda} - 
\int_{\MultiConfSpace} f \ d\MultiP_n \right| \to 0
\nonumber
\end{align}
as $n\mapsto +\infty$, which implies that $\MultiP$ is an accumulation point of the sequence
 ($\tilde{\MultiP}_n^{\Lambda}$).
  
Furthermore the measure  $\tP_n^{\Lambda}$ satisfies the DLR($\Lambda$) equation \eqref{eq_dlr_definition_wr}.
\end{proposition}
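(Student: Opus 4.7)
The plan is to establish the two assertions in turn, relying on the fact that $\MultiP_n$ is concentrated on configurations with points only in $\Lambda_n$, which makes the periodized construction $\hat{\MultiP}_n$ essentially a single-tile measure when viewed through a local window.

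For the first assertion, I would fix a bounded local function $f$ with support contained in some bounded $\Lambda_f \supseteq \Lambda$ and compare $\tilde{\MultiP}_n^\Lambda$ with the stationarized measure $\bar{\MultiP}_n$ (whose accumulation point is $\MultiP$ by construction). Writing
$$
\int f\, d\bar{\MultiP}_n = \frac{1}{\Leb(\Lambda_n)} \int_{\Lambda_n} \int f\, d\bigl(\hat{\MultiP}_n \circ \tau_x^{-1}\bigr)\, dx,
$$
the key observation is that as soon as $\Lambda_f \subseteq \tau_x(\Lambda_n)$, the tiled measure $\hat{\MultiP}_n \circ \tau_x^{-1}$ agrees on $\Lambda_f$ with the single-tile measure $\MultiP_n \circ \tau_x^{-1}$: only the translated tile containing $\Lambda_f$ can put points there, while the other tiles of $\hat{\MultiP}_n$ and the complement of $\tau_x(\Lambda_n)$ under $\MultiP_n \circ \tau_x^{-1}$ carry no points. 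Since $f$ is local on $\Lambda_f$, the inner integrals agree for such $x$. The exceptional $x \in \Lambda_n$ with $\Lambda_f \not\subseteq \tau_x(\Lambda_n)$ form a boundary strip of Lebesgue measure $O(n^{d-1})$, so the difference between $\int f\, d\tilde{\MultiP}_n^\Lambda$ and $\int f\, d\bar{\MultiP}_n$ is at most $2\|f\|_\infty \cdot O(1/n) \to 0$. Local convergence of $\bar{\MultiP}_n$ towards $\MultiP$ along a subsequence then transfers immediately to $\tilde{\MultiP}_n^\Lambda$, identifying $\MultiP$ as an accumulation point of the new sequence.

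For the second assertion, fix a bounded measurable $f$ and an $x \in \Lambda_n$ with $\Lambda \subseteq \tau_x(\Lambda_n)$. By stationarity of $\MultiPoisson{\MultiParamPoisson}$, the measure $\MultiP_n \circ \tau_x^{-1}$ is exactly the finite-volume Widom-Rowlinson measure $\Specification{\tau_x(\Lambda_n), \emptyset}$ on the translated box. The compatibility of the specification family $\Specification{\cdot,\cdot}$, which follows from factorizing the hardcore indicator $\1_\A$ over the disjoint regions $\Lambda$ and $\tau_x(\Lambda_n) \setminus \Lambda$ together with the product structure of the Poisson reference measure, yields the DLR($\Lambda$) equation for $\MultiP_n \circ \tau_x^{-1}$ itself. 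Integrating this identity against the nonnegative weight $\1_{\Lambda \subseteq \tau_x(\Lambda_n)}\, dx / \Leb(\Lambda_n)$ on $\Lambda_n$ preserves the equation term by term, so $\tilde{\MultiP}_n^\Lambda$ inherits DLR($\Lambda$).

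The principal obstacle is the first claim: one must verify cleanly the tile-by-tile identification between $\hat{\MultiP}_n \circ \tau_x^{-1}$ and $\MultiP_n \circ \tau_x^{-1}$ on a region safely contained in a single shifted tile. This is really a statement about the support of $\MultiP_n$ (points concentrated in $\Lambda_n$) and so reduces to a deterministic comparison outside the boundary strip, making the rest a routine quantitative estimate. The DLR part is standard: it only uses the compatibility of Gibbs kernels on nested volumes and the fact that DLR equations are stable under taking convex combinations or integrating over an auxiliary parameter.
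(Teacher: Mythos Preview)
Your proposal is correct and is precisely the standard argument the paper alludes to (the paper omits the proof, pointing to the Quermass-interaction reference for the first assertion and to compatibility of the Gibbs specification $\Specification{\Lambda,\MultiConf_{\Lambda^c}}$ for the second). One small remark: the displayed statement compares $\tilde{\MultiP}_n^{\Lambda}$ with $\MultiP_n$, which is almost certainly a typo for $\bar{\MultiP}_n$; you correctly compare with $\bar{\MultiP}_n$, since only that comparison yields the stated conclusion that $\MultiP$ is an accumulation point of $(\tilde{\MultiP}_n^{\Lambda})$.
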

We are omitting the proof of this standard result.
The first point is done in \cite{dereudre_2009} for the \emph{Quermass-interaction model}, and the second point is a consequence of the compatibility of the Gibbs
 specification $\Specification{\Lambda, \MultiConf_{\Lambda^c}}$.
 And both points are done in the PhD manuscript \cite{houdebert_these}.

Now let fix a measurable function $f$ bounded by $1$.
By the structure of $\MultiSigmaAlgebra$ we can consider $f$ to be local.
We are going to prove that for each $\epsilon>0$, the quantity
\begin{align}
\delta = \left| \int_{\MultiConfSpace} \ f d \MultiP  
- \int_{\MultiConfSpace} \int_{\MultiConfSpace} 
f(\MultiConf'_{\Lambda} \cup \MultiConf_{\Lambda^c})
\frac{\1_{\A}(\MultiConf'_{\Lambda} \cup \MultiConf_{\Lambda^c})}
{\PartitionFunction{\Lambda, \MultiConf_{\Lambda^c}}} 
\MultiPoisson{\MultiParamPoisson}_{\Lambda}(d\MultiConf'_{\Lambda})
 \MultiP (d\MultiConf) \right|
\nonumber
\end{align}
is bounded by $7\epsilon$. 

The function
$f_{\Lambda}(\MultiConf) :=
 \int_{\MultiConfSpace} 
f(\MultiConf'_{\Lambda} \cup \MultiConf_{\Lambda^c})
\frac{\1_{\A}(\MultiConf'_{\Lambda} \cup \MultiConf_{\Lambda^c})}
{\PartitionFunction{\Lambda, \MultiConf_{\Lambda^c}}} 
\MultiPoisson{\MultiParamPoisson}_{\Lambda}(d\MultiConf'_{\Lambda})$ is in general not local, which is the main obstacle in proving the result.
We cannot directly use the local convergence, and the piecewise convergence used in the proof of Proposition \ref{propo_candidat_non_degenere} is not good enough as we need a more uniform convergence.

Let us consider the event
\begin{align}
U_{r_1}=\{\MultiConf, \forall (x,r) \in \MultiConf_{\Lambda}, r \leq r_1 \}.
\nonumber
\end{align}
\begin{lemma}
\label{lemme_preuve_dlr_WR_introduction_petite_boule}
For $r_1$ large enough and for all $\MultiConf_{\Lambda^c} \in \A$ we have
\begin{align}
\left|
\int_{\MultiConfSpace} f(. \cup \MultiConf_{\Lambda^c})
\frac{\1_{\A}(. \cup \MultiConf_{\Lambda^c})}
{\PartitionFunction{\Lambda, \MultiConf_{\Lambda^c}} } 
d\MultiPoisson{\MultiParamPoisson}_{\Lambda}
-
\int_{\MultiConfSpace} f(. \cup \MultiConf_{\Lambda^c})
 \1_{U_{r_1}}(.) 
\frac{\1_{\A}(. \cup \MultiConf_{\Lambda^c})}
{\zr} 
d \MultiPoisson{\MultiParamPoisson}_{\Lambda}
\right| \leq \epsilon,
\nonumber
\end{align}
where $\zr : = \int_{\MultiConfSpace} \1_{U_{r_1}}
(\MultiConf'_{\Lambda}) 
\1_{\A}(\MultiConf'_{\Lambda} \cup \MultiConf_{\Lambda^c})
\MultiPoisson{\MultiParamPoisson}_{\Lambda}(d\MultiConf'_{\Lambda})$
is the \emph{modified partition function}.

The constant $r_1$ can be chosen to also have 
$\MultiPoisson{\MultiParamPoisson}_{\Lambda}(U_{r_1}^c) \leq \epsilon$.
\end{lemma}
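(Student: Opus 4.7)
The plan is to treat the two assertions separately: the bound $\MultiPoisson{\MultiParamPoisson}_{\Lambda}(U_{r_1}^c) \leq \epsilon$ is a direct Poisson computation, while the main inequality reduces to it once a uniform lower bound on the partition function is available.

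For the Poisson bound, the product structure of $\MultiPoisson{\MultiParamPoisson}_{\Lambda}$ as independent Poisson point processes on $\Lambda \times \R^+$ with intensities $\Intensity_i \Leb \otimes \Radius_i$ gives explicitly
$$\MultiPoisson{\MultiParamPoisson}_{\Lambda}(U_{r_1}^c) = 1 - \exp\left(-\sum_{i=1}^\Types \Intensity_i \Leb(\Lambda)\, \Radius_i(]r_1, +\infty[)\right),$$
which vanishes as $r_1 \to +\infty$ since each $\Radius_i$ is a probability measure on $\R^+$; hence $r_1$ can be chosen to make this quantity arbitrarily small, and in particular below $\epsilon$.

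For the main inequality, the crucial ingredient is the uniform lower bound
$$\PartitionFunction{\Lambda,\MultiConf_{\Lambda^c}} \geq \exp\bigl(-(\Intensity_1 + \dots + \Intensity_\Types)\Leb(\Lambda)\bigr),$$
valid for every boundary condition $\MultiConf_{\Lambda^c} \in \A$, since the empty interior configuration is compatible with any such boundary and contributes this mass to the normalisation against $\MultiPoisson{\MultiParamPoisson}_{\Lambda}$. Abbreviating $Z := \PartitionFunction{\Lambda,\MultiConf_{\Lambda^c}}$ and $Z_{r_1} := \zr$, the trivial monotonicity $0 \leq Z - Z_{r_1} \leq \MultiPoisson{\MultiParamPoisson}_{\Lambda}(U_{r_1}^c)$ holds, since $\1_{U_{r_1}^c}\1_{\A} \leq \1_{U_{r_1}^c}$.

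Splitting $\1 = \1_{U_{r_1}} + \1_{U_{r_1}^c}$, the difference to estimate decomposes as
$$\int f\, \1_{U_{r_1}} \1_{\A}\left(\frac{1}{Z} - \frac{1}{Z_{r_1}}\right) d\MultiPoisson{\MultiParamPoisson}_{\Lambda} + \int f\, \1_{U_{r_1}^c}\, \frac{\1_{\A}}{Z}\, d\MultiPoisson{\MultiParamPoisson}_{\Lambda}.$$
Using $|f| \leq 1$, the first term is bounded in absolute value by $Z_{r_1} \cdot (1/Z_{r_1} - 1/Z) = (Z-Z_{r_1})/Z$, and the second by $(Z-Z_{r_1})/Z$ as well; altogether the absolute value of the difference is at most $2(Z - Z_{r_1})/Z \leq 2\, e^{(\Intensity_1 + \dots + \Intensity_\Types)\Leb(\Lambda)}\, \MultiPoisson{\MultiParamPoisson}_{\Lambda}(U_{r_1}^c)$. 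Enlarging $r_1$ if necessary makes this bound smaller than $\epsilon$. The only real obstacle is the uniformity in $\MultiConf_{\Lambda^c}$: a priori a pathological boundary could force $Z$ to be very small and blow up the ratio, but this is resolved entirely by the empty-configuration lower bound, which depends on the boundary only through the membership in $\A$.
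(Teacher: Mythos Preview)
Your proof is correct and follows essentially the same route as the paper: the same Poisson computation for $\MultiPoisson{\MultiParamPoisson}_{\Lambda}(U_{r_1}^c)$, the same empty-configuration lower bound on the partition function, and the same splitting $\1 = \1_{U_{r_1}} + \1_{U_{r_1}^c}$. The only cosmetic difference is that the paper writes the $U_{r_1}^c$-piece as $\Specification{\Lambda,\MultiConf_{\Lambda^c}}(U_{r_1}^c)$ and bounds it by $\MultiPoisson{\MultiParamPoisson}_{\Lambda}(U_{r_1}^c)$ via stochastic domination (Proposition~\ref{propo_stochastic_dom_poisson_wr}), whereas you recognise that this term equals $(Z-Z_{r_1})/Z$ and bound it directly with the partition-function lower bound; your version is slightly more self-contained at the cost of an extra exponential factor in the constant.
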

The proof of Lemma \ref{lemme_preuve_dlr_WR_introduction_petite_boule} is done in Section \ref{section_preuves_lemmes_techniques}.
Using Lemma \ref{lemme_preuve_dlr_WR_introduction_petite_boule} we get
\begin{align}
\delta \leq \epsilon + \left| \int_{\MultiConfSpace} 
f \ d \MultiP  - \int _{\MultiConfSpace} \int_{\MultiConfSpace} f(\MultiConf'_{\Lambda} \cup \MultiConf_{\Lambda^c})
 \1_{U_{r_1}}(\MultiConf'_{\Lambda})
\frac{\1_{\A}(\MultiConf'_{\Lambda} \cup \MultiConf_{\Lambda^c})}{\zr}
 \MultiPoisson{\MultiParamPoisson}_{\Lambda}(d\MultiConf'_{\Lambda}) 
 \MultiP (d\MultiConf) \right|.
 \nonumber
\end{align}
Now in order to "localize" (with respect to $\MultiConf$) the functions $\1_{\A}$ and $\zr$ we introduce the events  
\begin{align}
\Upsilon_k = \{
\MultiConf \in \MultiConfSpace, \ \forall (x,r) \in \MultiConf,
\ r \leq  \frac{|x|}{2} + k
\}.
\nonumber
\end{align}

Let us note that a ball in a configuration in $\Upsilon_k$ has a radius smaller than the half of the distance of the centre from the origin (up to an additive fix constant $k$). Then, when the centre is far from the origin, the full ball is far from the origin as well. It is the reason why  $\Upsilon_k$ localizes the interaction. Now the next lemma claims that   $\Upsilon_k$ has a high probability when $k$ is large enough.

\begin{lemma}
\label{lemme_preuve_dlr_ups_total}
For $k$ large enough we have:
\begin{enumerate}
\item
$\MultiP(\Upsilon_k^c) \leq \epsilon$;
\item $\tilde{\MultiP}_n^{\Lambda}(\Upsilon_k^c) \leq \epsilon$  for each $n$;
\item $\MultiPoisson{\MultiParamPoisson}(\Upsilon_k^c) \leq \epsilon$.
\end{enumerate}
\end{lemma}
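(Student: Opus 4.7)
My plan is to observe that $\Upsilon_k^c$ is an increasing event and to derive all three bounds from a single Poisson estimate, transferred via the stochastic domination of Proposition \ref{propo_stochastic_dom_poisson_wr} and the stationarity of the reference Poisson measure.

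I would start with (3). Bounding $\1_{\Upsilon_k^c}$ by the number of ``bad'' marked points $(x,r)\in\MultiConf$ satisfying $r>|x|/2+k$ and applying Mecke's formula gives
\begin{align*}
\MultiPoisson{\MultiParamPoisson}(\Upsilon_k^c)
\,\le\, \sum_{i=1}^{\Types} \Intensity_i \int_{\R^\Dim}\Radius_i\bigl(\,]|x|/2+k,+\infty[\,\bigr)\,dx
\,=\, c_\Dim\sum_{i=1}^{\Types}\Intensity_i\int_{(k,+\infty)}(r-k)^\Dim\,\Radius_i(dr),
\end{align*}
with $c_\Dim>0$ a dimensional constant. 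Since $(r-k)^\Dim\1_{r>k}\le r^\Dim$ and $(r-k)^\Dim\1_{r>k}\to 0$ pointwise as $k\to+\infty$, the integrability assumption \eqref{eq_integrability_assumption} and dominated convergence yield $\MultiPoisson{\MultiParamPoisson}(\Upsilon_k^c)\to 0$, proving (3).

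For (2), although $\Upsilon_k^c$ is not translation invariant, every translate $\tau_x^{-1}(\Upsilon_k^c)$ remains an increasing event. Proposition \ref{propo_stochastic_dom_poisson_wr} combined with the stationarity of $\MultiPoisson{\MultiParamPoisson}$ then gives, for every $x\in\Lambda_n$,
\begin{align*}
\MultiP_n\bigl(\tau_x^{-1}(\Upsilon_k^c)\bigr)\,\le\, \MultiPoisson{\MultiParamPoisson}\bigl(\tau_x^{-1}(\Upsilon_k^c)\bigr)\,=\, \MultiPoisson{\MultiParamPoisson}(\Upsilon_k^c).
\end{align*}
Averaging in $x$ according to the definition of $\tilde{\MultiP}_n^{\Lambda}$ transfers this to $\tilde{\MultiP}_n^{\Lambda}(\Upsilon_k^c)\le \MultiPoisson{\MultiParamPoisson}(\Upsilon_k^c)$, uniformly in $n$; the same argument applied to the superposition measure $\hat{\MultiP}_n$, which is dominated by $\MultiPoisson{\MultiParamPoisson}$ thanks to independence together with Proposition \ref{propo_stochastic_dom_poisson_wr}, also yields $\bar{\MultiP}_n(\Upsilon_k^c)\le \MultiPoisson{\MultiParamPoisson}(\Upsilon_k^c)$.

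The main obstacle, and the only genuinely nontrivial step, is (1): $\MultiP$ is known as a cluster point of $(\bar{\MultiP}_n)$ only in the local convergence topology, whereas $\Upsilon_k^c$ is not a local event. To bypass this, I would approximate $\Upsilon_k^c$ from below by the local increasing events
\begin{align*}
\Upsilon_{k,R}^c := \bigl\{\MultiConf : \exists\,(x,r)\in\MultiConf \text{ with } x\in B(0,R),\ r>|x|/2+k\bigr\},
\end{align*}
whose indicators are bounded local functions since they depend on $\MultiConf$ only through its restriction to $B(0,R)\times\R^+$. Because $\Upsilon_{k,R}^c\uparrow\Upsilon_k^c$ as $R\to+\infty$, monotone convergence combined with the local convergence $\bar{\MultiP}_n\to\MultiP$ gives
\begin{align*}
\MultiP(\Upsilon_k^c)\,=\,\lim_{R\to+\infty}\lim_{n\to+\infty}\bar{\MultiP}_n(\Upsilon_{k,R}^c)\,\le\,\limsup_{n\to+\infty}\bar{\MultiP}_n(\Upsilon_k^c)\,\le\, \MultiPoisson{\MultiParamPoisson}(\Upsilon_k^c),
\end{align*}
so (1) follows from (3).
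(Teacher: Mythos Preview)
Your proof is correct and follows the same strategy as the paper: reduce (1) and (2) to (3) via the fact that $\Upsilon_k^c$ is increasing together with the stochastic domination of Proposition~\ref{propo_stochastic_dom_poisson_wr}, then control the Poisson probability in (3). The paper simply cites external references for (3), whereas you supply the explicit Mecke computation; and for (1) the paper invokes $\MultiP\preceq\MultiPoisson{\MultiParamPoisson}$ directly, while your local approximation by the events $\Upsilon_{k,R}^c$ makes explicit how the domination passes to the cluster point $\MultiP$ (which at this stage is not yet known to lie in $WR(\MultiParamPoisson)$, so Proposition~\ref{propo_stochastic_dom_poisson_wr} cannot be applied to it verbatim).
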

The proof of Lemma \ref{lemme_preuve_dlr_ups_total} is done in Section \ref{section_preuves_lemmes_techniques}.
Using this lemma we have
\begin{align*}
%\small
\delta \leq 2 \epsilon +&
\bigg| \int_{\MultiConfSpace} f \ d \MultiP  
\nonumber \\
& - 
\int_{\MultiConfSpace}  \int_{\MultiConfSpace} 
 f(\MultiConf'_{\Lambda} \cup \MultiConf_{\Lambda^c})
\1_{U_{r_1} }(\MultiConf'_{\Lambda})
\1_{\Upsilon_k}(\MultiConf)
\frac{\1_{\A}(\MultiConf'_{\Lambda} \cup \MultiConf_{\Lambda^c})}{\zr}
\MultiPoisson{\MultiParamPoisson}_{\Lambda}(d\MultiConf'_{\Lambda}) 
\MultiP (d\MultiConf)
 \bigg|.
\end{align*}
Now with the introduction of the events $U_{r_1}$ and $\Upsilon_k$, the next lemma enables us to "localize" the integrated functions.
\begin{lemma}
\label{lemme_preuve_dlr_localisation}
There exists $\Delta$ bounded, depending on $r_1$ and $k$, 
such that for every $\MultiConf'_{\Lambda} \in U_{R_1}$ and 
$\MultiConf \in \Upsilon_k \cap \A$,
\begin{align}
\1_{\A}(\MultiConf'_{\Lambda} \cup \MultiConf_{\Lambda^c})
=
\1_{\A}(\MultiConf'_{\Lambda} \cup \MultiConf_{\Delta \setminus \Lambda})
\text{ and }
\zr =
\boldsymbol{Z}_{ R_1}
(\Lambda, \MultiConf_{\Delta \setminus \Lambda}).
\nonumber
\end{align}
\end{lemma}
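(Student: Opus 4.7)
The plan is to choose $\Delta$ as a sufficiently large ball around the origin containing $\Lambda$, whose radius depends only on $r_1$, $k$ and the diameter of $\Lambda$, and then to show by a direct geometric estimate that under the hypotheses $\MultiConf'_{\Lambda} \in U_{r_1}$ and $\MultiConf \in \Upsilon_k \cap \A$, no ball centred at a point of $\MultiConf_{\Delta^c}$ can overlap with any ball of $\MultiConf'_{\Lambda}$. Concretely, setting $R_\Lambda := \sup_{y \in \Lambda} |y|$, I take $\Delta := \overline{B}(0, 2R_\Lambda + 2k + 2r_1 + 1)$, which is bounded and contains $\Lambda$, so that $\MultiConf_{\Delta \setminus \Lambda}$ is meaningful.

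The key geometric step to verify is: for every $(x', r') \in \MultiConf'_{\Lambda}$ with $\MultiConf'_{\Lambda} \in U_{r_1}$ and every $(x,r) \in \MultiConf$ with $\MultiConf \in \Upsilon_k$ and $x \notin \Delta$, one has $|x - x'| > r + r'$, i.e.\ the closed balls $B(x,r)$ and $B(x',r')$ are disjoint. The proof is a one-line estimate: the condition $\MultiConf \in \Upsilon_k$ gives $r \leq |x|/2 + k$, while $\MultiConf'_{\Lambda} \in U_{r_1}$ together with $x' \in \Lambda$ yield $r' \leq r_1$ and $|x'| \leq R_\Lambda$. Since $|x| > 2R_\Lambda + 2k + 2r_1$ we have $|x|/2 > R_\Lambda + k + r_1$, hence
\begin{align*}
|x - x'| \,\geq\, |x| - |x'| \,\geq\, |x| - R_\Lambda \,>\, \tfrac{|x|}{2} + k + r_1 \,\geq\, r + r',
\end{align*}
which proves the geometric claim.

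The two equalities then follow by pure bookkeeping. Writing $\MultiConf_{\Lambda^c} = \MultiConf_{\Delta \setminus \Lambda} \cup \MultiConf_{\Delta^c}$, membership in $\A$ amounts to checking the non-overlap condition for every pair of balls of different colours. The pairs internal to $\MultiConf_{\Lambda^c}$ are already compatible because $\MultiConf \in \A$, and the pairs between $\MultiConf'_{\Lambda}$ and $\MultiConf_{\Delta^c}$ are automatically compatible by the geometric claim above; only the constraints between $\MultiConf'_{\Lambda}$ and $\MultiConf_{\Delta \setminus \Lambda}$ can possibly be violated on either side of the first identity, which yields $\1_{\A}(\MultiConf'_{\Lambda} \cup \MultiConf_{\Lambda^c}) = \1_{\A}(\MultiConf'_{\Lambda} \cup \MultiConf_{\Delta \setminus \Lambda})$. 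The second equality is then obtained by multiplying both sides by $\1_{U_{r_1}}(\MultiConf'_{\Lambda})$ and integrating against $\MultiPoisson{\MultiParamPoisson}_{\Lambda}(d\MultiConf'_{\Lambda})$. The statement is a purely deterministic localisation result with no real analytic obstacle; the only delicate point is choosing $\Delta$ large enough so that the $\Upsilon_k$ scaling $r \leq |x|/2 + k$ and the $U_{r_1}$ bound $r' \leq r_1$ are absorbed simultaneously, which is exactly what the factor $2$ in the radius of $\Delta$ accomplishes.
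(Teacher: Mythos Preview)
Your proof is correct and follows essentially the same approach as the paper: both exploit that $\MultiConf'_{\Lambda} \in U_{r_1}$ forces $L(\MultiConf'_{\Lambda}) \subseteq \Lambda \oplus B(0,r_1)$, while the $\Upsilon_k$ condition $r \le |x|/2 + k$ makes balls centred outside a large enough $\Delta$ miss this region, so only points in $\Delta \setminus \Lambda$ interact with $\MultiConf'_{\Lambda}$. The paper merely asserts that such a $\Delta$ exists and ``can be made explicit'', whereas you actually carry out the explicit choice and the triangle-inequality estimate; otherwise the arguments are identical.
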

The proof of Lemma \ref{lemme_preuve_dlr_localisation} is done is Section \ref{section_preuves_lemmes_techniques}.
With this Lemma we have
\begin{align}
\small
\delta &\leq  2 \epsilon +
\bigg| \int_{\MultiConfSpace} f d\MultiP  
\nonumber \\
&\quad \ -
\int_{\MultiConfSpace}  \int_{\MultiConfSpace} 
 f(\MultiConf'_{\Lambda} \cup \MultiConf_{\Lambda^c})
\1_{U_{r_1} }(\MultiConf'_{\Lambda})
\1_{\Upsilon_k}(\MultiConf)
\frac{\1_{\A}(\MultiConf'_{\Lambda} \cup 
\MultiConf_{\Delta \setminus  \Lambda})}
{\boldsymbol{Z}_{r_1}(\Lambda, \MultiConf_{\Delta \setminus \Lambda})}
\MultiPoisson{\MultiParamPoisson}_{\Lambda}(d\MultiConf'_{\Lambda}) 
\MultiP (d\MultiConf)
 \bigg| \nonumber
 \\ & \leq 
4 \epsilon +
\bigg| \int_{\MultiConfSpace} f d \tilde{\MultiP}_n^{\Lambda} 
\nonumber \\
&\quad \quad  \quad - 
\int_{\MultiConfSpace}  \int_{\MultiConfSpace} 
 f(\MultiConf'_{\Lambda} \cup \MultiConf_{\Lambda^c})
\1_{U_{r_1}}(\MultiConf'_{\Lambda})
\frac{\1_{\A}(\MultiConf'_{\Lambda} \cup \MultiConf_{\Delta \setminus  \Lambda})}
{\boldsymbol{Z}_{r_1}(\MultiConf_{\Delta \setminus \Lambda})}
\MultiPoisson{\MultiParamPoisson}_{\Lambda}(d\MultiConf'_{\Lambda}) 
\tilde{\MultiP}_n^{\Lambda} (d\MultiConf)
 \bigg|,
\nonumber
\end{align}
where the last inequality comes from Lemma \ref{lemme_preuve_dlr_ups_total} and the local convergence of Proposition \ref{propo_conv_local_et_dlr}, for $n$ large enough fixed from now on.
Now using again Lemma  \ref{lemme_preuve_dlr_WR_introduction_petite_boule}  and Lemma \ref{lemme_preuve_dlr_ups_total},
we obtain
\begin{align}
\delta &\leq  7 \epsilon +
\underbrace{\left| \int_{\MultiConfSpace} f d \tilde{\MultiP}_n^{\Lambda}  - 
\int_{\MultiConfSpace}  \int_{\MultiConfSpace}  
%\1_{U_{r_1}}(\MultiConf'_{\Lambda})
f(\MultiConf'_{\Lambda} \cup \MultiConf_{\Lambda^c})
\frac{\1_{\A}(\MultiConf'_{\Lambda} \cup \MultiConf_{\Lambda^c})}
{\PartitionFunction{\Lambda, \MultiConf_{\Lambda^c}}}
\MultiPoisson{\MultiParamPoisson}_{\Lambda}(d\MultiConf'_{\Lambda}) 
\tilde{\MultiP}_n^{\Lambda} (d\MultiConf)
 \right|}_{=0 \text{ thanks to Proposition\ref{propo_conv_local_et_dlr}}}.
\nonumber
%\\ & =  6 \epsilon +
%\left| \int_{\MultiConfSpace} f d \tilde{\MultiP}_n^{\Lambda}  - 
%\int_{\MultiConfSpace}   \1_{U_{r_1}}(\MultiConf_{\Lambda})
%f(\MultiConf) \tilde{\MultiP}_n^{\Lambda} (d\MultiConf)
% \right| \nonumber
%\\ &\leq  6 \epsilon +
%\tilde{\MultiP}_n^{\Lambda} (U_{r_1}^c) 
%\leq 7 \epsilon,
%\nonumber
\end{align}

\subsection{Proof of the lemmas}
\label{section_preuves_lemmes_techniques}
\subsubsection{Proof of Lemma \ref{lemme_preuve_dlr_WR_introduction_petite_boule}}
First from standard computation we have
\begin{align}
\label{eq_preuve_dlr_poisson_petite_boule}
\Poisson{\MultiParamPoisson} (U_{r_1}^c)
=1- \exp \left(-\Leb (\Lambda )
\sum_i \Intensity_i \Radius_i([r_1,\infty[) \right)
\end{align}
which indeed can be as small as needed when $r_1$ is large enough.
\begin{align}
\label{eq_preuve_dlr_petite_boule_dans_dlr}
& \left|
\int_{\MultiConfSpace} f(. \cup \MultiConf_{\Lambda^c})
\frac{\1_{\A}(. \cup \MultiConf_{\Lambda^c})}
{\PartitionFunction{\Lambda, \MultiConf_{\Lambda^c}} } 
d\MultiPoisson{\MultiParamPoisson}_{\Lambda}
-
\int_{\MultiConfSpace} f(. \cup \MultiConf_{\Lambda^c})
 \1_{U_{r_1}}(.) 
\frac{\1_{\A}(. \cup \MultiConf_{\Lambda^c})}
{\zr} 
d \MultiPoisson{\MultiParamPoisson}_{\Lambda}
\right| \nonumber
\\ & \hspace{0.2cm} \leq
\Specification{\Lambda, \MultiConf_{\Lambda^c}}(U_{r_1}^c)
+
\frac{\PartitionFunction{\Lambda, \MultiConf_{\Lambda^c}}
- \zr }
{\PartitionFunction{\Lambda, \MultiConf_{\Lambda^c}}}
 \nonumber
 \\ & \hspace{0.2cm} \leq
 \Poisson{\MultiParamPoisson} (U_{r_1}^c)
 + \exp(\Leb (\Lambda) \sum_i \Intensity_i) \Poisson{\MultiParamPoisson} (U_{r_1}^c),
\end{align}
where the last inequality comes from Proposition \ref{propo_stochastic_dom_poisson_wr} and Proposition \ref{propo_candidat_non_degenere}.
So by choosing $r_1$ large enough both quantities \eqref{eq_preuve_dlr_poisson_petite_boule} and \eqref{eq_preuve_dlr_petite_boule_dans_dlr} are smaller than $\epsilon$.

\subsubsection{Proof of Lemma \ref{lemme_preuve_dlr_ups_total}}
The events $\Upsilon_k^c$ are increasing.
Therefore from Proposition \ref{propo_stochastic_dom_poisson_wr} we have 
$\MultiP (\Upsilon_k^c) \leq \MultiPoisson{\MultiParamPoisson} (\Upsilon_k^c)$.
Furthermore we have
\begin{align*}
\tilde{\MultiP}_n^{\Lambda}(\Upsilon_k^c)
\ \leq  \ \MultiP_n(\Upsilon_k^c)
\ \leq \  \Poisson{\MultiParamPoisson}_{\Lambda}(\Upsilon_k^c)
\ \leq \ \Poisson{\MultiParamPoisson}(\Upsilon_k^c),
\end{align*}
where the third inequality is a consequence of Proposition \ref{propo_stochastic_dom_poisson_wr} applied to 
$\MultiP_n = \Specification{\Lambda_n, \emptyset}$.
So points 1 and 2 from the lemma are a direct consequence of point 3.

The point 3 is proved in \cite[Lemma I.3.22]{houdebert_these}  or can be adapted from the proof of Lemma 3.5 in \cite{dereudre_2009}.

\subsubsection{Proof of Lemma \ref{lemme_preuve_dlr_localisation}}
Since $\MultiConf \in \A$, it is enough to check that balls  $(x,r) \in \MultiConf$ centred far enough can not overlap  $L(\MultiConf'_{\Lambda})$.
But since $\MultiConf'_{\Lambda} \in U_{R_1}$, we have 
$L(\MultiConf'_{\Lambda}) \subseteq \Lambda \oplus B(0,r_1)$.
Finally $\MultiConf \in \Upsilon_k$ and therefore we have for a set $\Delta$ large enough (which can be made explicit) that for $(x,r) \in \MultiConf_{\Delta^c}$, $B(x,r) \cap \Lambda \oplus B(0,R_1) = \emptyset$.
This concludes the proof of the Lemma.

\section{Proof of Theorem \ref{theo_polychromaticite_cas_extreme_faible activite} }
\label{section_preuve_non_monochromaticite_petite activite}
The first assertion is a consequence of the second which  we prove now.
Consider the sequence $(\MultiP_n)$ of finite-volume Widom-Rowlinson measures with free boundary condition, and $(\bar{\MultiP}_n)$ the stationary modification, defined in the proof of Theorem \ref{theo_existence_wr}, see Section \ref{section_preuve_theo_existence}.
As in the proof of Theorem \ref{theo_existence_wr}, the sequence $(\bar{\MultiP}_n)$ admits an accumulation point denoted by $\MultiP$, which satisfies $\MultiP (\A)=1$.
But the proof of the DLR equations done for the proof of Theorem \ref{theo_existence_wr} in Section \ref{section_preuve_theo_existence} is not valid in this case since Lemma \ref{lemme_preuve_dlr_ups_total} is false in the extreme case.
In fact to prove the DLR equations we will use the polychromaticity which we show now using the specific entropy.
\subsection{Lower bound of the specific entropy of monochromatic measures}
Consider a monochromatic stationary probability measure $\MultiP^{mono}$.
Without loss of generality consider $\MultiP^{mono}$ to be ergodic.
This implies in particular that the colour of $\MultiP^{mono}$ is deterministic, and let us call it $i$. 
Let us compute the specific entropy of $\MultiP^{mono}$.
Let $n \geq 0$.
If $\MultiP^{mono}_{\Lambda_n}$ is not absolutely continuous with respect to 
$\Poisson{\MultiParamPoisson}_{\Lambda_n}$, then 
$\Entropy_{\Lambda_n} 
(\MultiP^{mono} | \Poisson{\MultiParamPoisson}) = \infty$.
Otherwise 
$\MultiP^{mono}_{\Lambda_n} \ll \Poisson{\MultiParamPoisson}_{\Lambda_n}$ but, since $\MultiP^{mono}$ is monochromatic of colour $i$, then we also have
$\MultiP^{mono}_{\Lambda_n} \ll
\Poisson{\bar{\MultiIntensity}^i , \MultiRadius}_{\Lambda_n}$
where $\bar{\MultiIntensity}^i:= (0, \dots ,0, \Intensity_i, 0, \dots ,0)$
is the vector where the only non-zero coordinate being $z_i$ at the $i$-th position. Therefore
\begin{align}
\label{eq_minoration_entropy_mono_retour_poisson}
\Entropy_{\Lambda_n}  (\MultiP^{mono} | \Poisson{\MultiParamPoisson})
&= \nonumber
\Entropy_{\Lambda_n}  (\MultiP^{mono} | 
\Poisson{\bar{\MultiIntensity}^i , \MultiRadius})
+
\Entropy_{\Lambda_n}  (\Poisson{\bar{\MultiIntensity}^i , \MultiRadius}|
 \Poisson{\MultiParamPoisson})
\\ & \geq 
\Entropy_{\Lambda_n}  (\Poisson{\bar{\MultiIntensity}^i , \MultiRadius}|
 \Poisson{\MultiParamPoisson}),
\end{align}
where the inequality \eqref{eq_minoration_entropy_mono_retour_poisson} comes from the positivity of the relative entropy.
But a direct computation leads to
$$
\frac{d\Poisson{\bar{\MultiIntensity}^i , \MultiRadius}_{\Lambda_n}}
{d\Poisson{\MultiParamPoisson}_{\Lambda_n}} (\MultiConf)
= 
\prod_{j \not = i}  \exp (\Intensity_j \Leb(\Lambda_n) ) \1_{\Conf^j= \emptyset}
$$
which implies
\begin{align*}
\Entropy_{\Lambda_n}  (\Poisson{\bar{\MultiIntensity}^i , \MultiRadius}|
 \Poisson{\MultiParamPoisson})
 &= \Leb (\Lambda_n) \sum_{j \not = i} \Intensity_j
 \geq 
 \Leb (\Lambda_n) \min_i \left( \sum_{j \not = i} \Intensity_j \right).
\end{align*}
The last inequality together with \eqref{eq_minoration_entropy_mono_retour_poisson} implies
\begin{align}
\label{eq_minoration_entropy_monochromatique}
\Entropy(\MultiP^{mono})
\geq
\min_{i} \left( \sum_{j \not = i} \Intensity_j \right)
=
\Intensity \left( 1 - \max_i \Proportion_i \right).
\end{align}
\begin{remark}
Inequality \eqref{eq_minoration_entropy_monochromatique} cannot be improved since inequality \eqref{eq_minoration_entropy_mono_retour_poisson} becomes an equality in the case where $\MultiP^{mono} = 
\Poisson{\bar{\MultiIntensity}^i , \MultiRadius}$.
\end{remark}

\subsection{Upper bound of the specific entropy of $\MultiP$}
Let us now look at the specific entropy of $\MultiP$.
If we prove that $\Entropy(\MultiP)< 
\Intensity \left( 1 - \max_i \Proportion_i \right)$, then, by the lower bound \eqref{eq_minoration_entropy_monochromatique}, $\MultiP$ is not monochromatic.
The bound from Proposition \ref{propo_borne_entropi_spécifique_wr} is not good enough for the purpose of the current proof and we now improve it.
Divide the cube $\Lambda_n$ into copies of the smaller cube $\Lambda_m$ with $m<n$. We denote by $k_n$ the number of such copies of $\Lambda_m$ in $\Lambda_n$ and we fix $\epsilon<1- \Proportion_{\max}<1$.

One example of authorized configuration is when each copy of $\Lambda_m$ contains only one type of particles and when the balls do not overlap the outside of the cube.
This leads to the following inequality.
\begin{align}
\Poisson{\Intensity \MultiProportion , \MultiRadius}_{\Lambda_n}(\A)
\geq
\exp\left(-\Intensity \Leb (\Lambda_n)\right)
\left(  
1 + \sum_{ i} \sum_{l \geq 1}
\frac{\Intensity^l \Proportion_i^l \Leb (\Lambda_m)^l \phi_{m,i}^l}{l !}
\right)^{k_n},
\end{align}
with $\phi_{m,i} := \frac{1}{\Leb (\Lambda_m)} \int_{\R^d} \int_{\R^+} \1_{B(x,r) \subseteq \Lambda_m} Q_i(dr) dx$. 
Fix $\gamma <1 - \epsilon - \Proportion_{\max}$.
By choosing $m$ large enough, we have $\phi_{m,i} \geq 1 - \gamma$ for all $i$.
Furthermore we have
\begin{align}
\Entropy (\bar{\MultiP}_n)
&=  \nonumber
- \frac{\log(\Poisson{\Intensity \MultiProportion, \MultiRadius}_{\Lambda_n}(\A))}{\Leb (\Lambda_n)}
\\ & \leq
\Intensity  - \frac{k_n}{\Leb (\Lambda_n)} 
\log \left(1- \Types + \sum_i 
\exp(\Intensity \Proportion_i \Leb (\Lambda_m) \phi_{m,i}) 
\right).
\end{align}
Fix $\beta<1$ satisfying 
$\epsilon + \Proportion_{\max} \leq \beta (1-\gamma)$.
Using the fact that 
$\Leb (\Lambda_n)/k_n \underset{n \to \infty}{\longrightarrow} \Leb (\Lambda_m)$,
we have for $n$ large enough that
\begin{align}
\label{eq_preuve_non_mono_plus_de_n}
\Entropy (\bar{\MultiP}_n)
 \leq
\Intensity  - \frac{\beta}{\Leb (\Lambda_m)} 
\log \left(1- \Types + \sum_i 
\exp(\Intensity \Proportion_i \Leb (\Lambda_m) \phi_{m,i}) 
\right).
\end{align}
The bound from \eqref{eq_preuve_non_mono_plus_de_n} is not depending on $n$.
It remains to prove that the following function of $z$ is negative close to the origin, uniformly in $\MultiProportion$ satisfying \eqref{eq_condition_proportion},
\begin{align*}
\Psi_{\MultiProportion}(\Intensity)
:= 
\Intensity \max_i \Proportion_i - \frac{\beta}{\Leb (\Lambda_m)}
\log \left(1- \Types + \sum_i \exp(\Intensity \Proportion_i \Leb (\Lambda_m) \phi_{m,i}) \right).
\end{align*}
It satisfies $\Psi_{\MultiProportion}(0)=0$ and
\begin{align*}
\Psi'_{\MultiProportion}(\Intensity)
& =
\max_i \Proportion_i - \beta
\frac{\sum_i \Proportion_i \phi_{m,i}\exp(\Intensity \Proportion_i \Leb (\Lambda_m) \phi_{m,i})}
{1- \Types + \sum_i \exp(\Intensity \Proportion_i \Leb (\Lambda_m) \phi_{m,i})}
\\ & \leq
 \Proportion_{\max}
- \frac{\beta (1 - \gamma)}
{1- \Types + \sum_i \exp(\Intensity 
\Proportion_{max} \Leb (\Lambda_m) \phi_{m,i})},
\end{align*}
where the last bound does not depend on $\MultiProportion$.
Therefore by the choice of the parameters we have
$\Psi'_{\MultiProportion}(0)\leq - \epsilon$ and thus for $z$ small enough, uniform in $\MultiProportion$ satisfying \eqref{eq_condition_proportion}, we have 
$\Psi'_{\MultiProportion}(z)\leq -\frac{ \epsilon}{2}$.
Therefore using the lower semi-continuity of the specific entropy, we get that $\Entropy (\MultiP) < 
\Intensity \left( 1 - \max_i \Proportion_i \right)$, which implies together with \eqref{eq_minoration_entropy_monochromatique} that $\MultiP(Poly)>0$.

Now by conditioning on the event $Poly$, one can prove that the conditioned probability measure satisfies the DLR equations \eqref{eq_dlr_definition_wr}.
This was done in detail in \cite{dereudre_houdebert} for the symmetric case.
%\begin{remark}
%The key element of the proof is to show that every accumulation point of the sequence $(\bar{\MultiP}_n)$ is not monochromatic for small activities.
%However the proof does not give any clue what is happening for large activities.
%\end{remark}

\section{Proof of Theorem \ref{theo_conjecture_preuve_faible}}
\label{section_preuve_conjecture_faible}
\subsection{Fortuin-Kasteleyn representation}
We consider the sequence $(\bar{\MultiP}_n^{\text{free}})$ defined in Section \ref{section_results} which admits, 
thanks to Proposition \ref{propo_compactness_entropy_specifique}, at least one cluster point denoted $\MultiP^{\text{free}}$. 
The second part of the theorem, which claims that, for small activity, $\MultiP^{\text{free}}$ is polychromatic, is similar to the proof of Theorem \ref{theo_polychromaticite_cas_extreme_faible activite}. 
We do not give the details here. So it remains to prove the first part of the theorem on the monochromaticity. 

In a same fashion as Section \ref{section_preuve_theo_existence}, one can easily prove that $\MultiP^{\text{free}}(\A)=1$.
So one way of proving that $\MultiP^{\text{free}}$ is monochromatic is to prove that $\MultiP^{\text{free}}$ covers the whole space $\R^\Dim$ with one giant connected component.

The first step of the proof is to transfer the problem from multi-type Widom-Rowlinson measure in $\MultiConfSpace$ to "single-type" Continuum Random Cluster measure in $\ConfSpace$.

To a measure $\MultiP$ on $\MultiConfSpace$ we associate its \emph{color-blind} measure, denoted by $P$, which is a probability measure on $\ConfSpace$ defined by
\begin{align*}
\forall E \in \SigmaAlgebra, \ 
P(E) = \MultiP (\{\MultiConf \, | \cup_i \Conf^i \in E  \}) \,.
\end{align*}

The specific entropy and the local convergence topology can be defined the same as for $\MultiConfSpace$.
In particular $P^{\text{free}}$ is still an accumulation point of 
the sequence $(\bar{P}_n^{\text{free}})$.
\begin{proposition}[Fortuin-Kasteleyn representation]
The measure $P_n^{\text{free}}$ is a \emph{Continuum Random Cluster measure} on the bounded box $\Lambda_n^{(k_n)}$ with free boundary condition and with parameters $\Types, \Intensity$ and $\Radius$.
This means
\begin{align*}
P_n^{\text{free}}(d\Conf)= 
\frac{\Types^{\ncc (\Conf)}}{Z_n^{(k_n)}} 
\Poisson{\Intensity,\Radius}_{\Lambda_n^{(k_n)}}(d\Conf),
\end{align*}
where $\ncc (\Conf)$ denotes the number of connected components of the structure $L(\Conf)$, and where $Z_n^{(k_n)}$ is the normalizing constant.
\end{proposition}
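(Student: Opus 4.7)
The plan is to integrate out the colors in the definition of $\MultiP_n^{\text{free}}$ and read off the density of the color-blind marginal $P_n^{\text{free}}$ with respect to $\Poisson{\Intensity,\Radius}_{\Lambda_n^{(k_n)}}$. The starting point is the superposition/thinning representation of the multi-type Poisson process in the symmetric case: $\MultiPoisson{\MultiParamPoisson}_{\Lambda_n^{(k_n)}}$ can equivalently be sampled by first drawing a single-type configuration $\Conf$ from $\Poisson{\Types \Intensity, \Radius}_{\Lambda_n^{(k_n)}}$ (the superposition of $\Types$ independent copies of $\Poisson{\Intensity,\Radius}$) and then, conditionally on $\Conf$, assigning to each point of $\Conf$ an independent color drawn uniformly on $\{1,\dots,\Types\}$. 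Under this coupling each of the $\Types^{|\Conf|}$ color assignments has probability $\Types^{-|\Conf|}$ conditionally on $\Conf$.

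The combinatorial heart of the argument is the identity: the number of color assignments of $\Conf$ that produce an authorized multi-configuration equals $\Types^{\ncc(\Conf)}$. Indeed, two points of $\Conf$ whose balls overlap must receive the same color by the definition of $\A$, so by transitivity every connected component of the germ-grain set $L(\Conf)$ must be monochromatic; conversely, any coloring that is constant on each connected component produces an authorized multi-configuration. Since colors on distinct components are chosen independently among $\Types$ values, the count is $\Types^{\ncc(\Conf)}$, and the conditional authorization probability given $\Conf$ is $\Types^{\ncc(\Conf)-|\Conf|}$. Integrating out the colors in the definition of $\MultiP_n^{\text{free}}$ thus produces
\begin{align*}
P_n^{\text{free}}(d\Conf) \;=\; \frac{\Types^{\ncc(\Conf)-|\Conf|}}{\FinitePartitionFunction}\; \Poisson{\Types\Intensity,\Radius}_{\Lambda_n^{(k_n)}}(d\Conf).
\end{align*}

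The final step is to rewrite this density with respect to $\Poisson{\Intensity,\Radius}_{\Lambda_n^{(k_n)}}$ via the elementary Radon-Nikodym identity
\begin{align*}
\frac{d\Poisson{\Types\Intensity,\Radius}_{\Lambda}}{d\Poisson{\Intensity,\Radius}_{\Lambda}}(\Conf)=\Types^{|\Conf|}\exp\bigl(-(\Types-1)\Intensity\Leb(\Lambda)\bigr).
\end{align*}
The factor $\Types^{|\Conf|}$ cancels the $\Types^{-|\Conf|}$ produced by the coloring count, the deterministic exponential is absorbed into a redefined normalizing constant $Z_n^{(k_n)} := \FinitePartitionFunction \exp\bigl(-(\Types-1)\Intensity\Leb(\Lambda_n^{(k_n)})\bigr)$, and the announced Fortuin-Kasteleyn formula appears. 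I expect no serious obstacle: the argument is pure bookkeeping, and the only substantive point is the combinatorial identity for authorized colorings, which relies squarely on the fact that the connectivity defining $\ncc$ and the overlap constraint defining $\A$ are one and the same. A minor sanity check consists in verifying that boundary situations (tangent balls, or atoms of $\Radius$ at $0$) occur with zero $\Poisson{\Types\Intensity,\Radius}$-probability, so that the monochromatic-per-component characterization holds almost surely.
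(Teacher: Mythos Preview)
Your argument is correct and is exactly the standard derivation of the Fortuin--Kasteleyn representation for the continuum Widom--Rowlinson model. The paper itself does not give a proof of this proposition: it simply states that the result is classical and refers to \cite{chayes_kotecky,georgii_haggstrom,dereudre_houdebert,houdebert_2017}. What you wrote is precisely the proof one finds in those references---superposition/thinning to pass to a single-type Poisson process of intensity $\Types\Intensity$, the combinatorial count $\Types^{\ncc(\Conf)}$ of authorized colorings, and the Radon--Nikodym change of intensity absorbing the $\Types^{|\Conf|}$ factor---so there is nothing to compare.

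One very small remark on your closing sanity check: tangent balls are not actually a problem for the combinatorial identity, since closed balls that touch are both in the same connected component of $L(\Conf)$ \emph{and} forbidden to carry distinct colors by the definition of $\A$ (the intersection $L(\Conf^i)\cap L(\Conf^j)$ contains the tangent point). The only genuine null set to discard is that of coincident centres, which indeed has $\Poisson{\Types\Intensity,\Radius}$-probability zero because the spatial part of the intensity is Lebesgue.
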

This proposition is a standard result, known as the \emph{Fortuin-Kasteleyn representation} or \emph{grey representation}, proved and used in a lot of articles such as \cite{chayes_kotecky,georgii_haggstrom,dereudre_houdebert,houdebert_2017}.

The next proposition is the key element of the proof of Theorem \ref{theo_conjecture_preuve_faible}.
\begin{proposition}
\label{proposition_majoration_ncc}
In the assumptions of Theorem \ref{theo_conjecture_preuve_faible}, there exists $\tilde{\Intensity}<\infty$ such that for all 
$\Intensity>\tilde{\Intensity} $ and for all $n$

\begin{align*}
\int_{\ConfSpace} \ncc (\Conf) P_n^{\text{free}}(d\omega) \leq D \times C^{k_n},
\end{align*}
where $C,D>1$ are finite constants not depending on $n$.
\end{proposition}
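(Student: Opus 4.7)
The plan is to use the Fortuin--Kasteleyn representation just established and then to bound the expected number of connected components by a renewal argument along the elongated $x_1$-direction. By the FK proposition,
\[
\int \ncc(\omega)\, P_n^{\text{free}}(d\omega) = \frac{\int \ncc(\omega)\, \Types^{\ncc(\omega)}\, \pi^{z,\Radius}_{\Lambda_n^{(k_n)}}(d\omega)}{Z_n^{(k_n)}},
\]
and the task reduces to bounding this ratio by $DC^{k_n}$ uniformly in $n$. The guiding intuition is that when $z$ is very large, the weight $\Types^{\ncc}$ (with $\Types\ge 2$) favours configurations in which the bulk of the box is occupied by a single giant connected component, so that ``extra'' components can survive only near the two short cross-sections $\{\pm n\}\times [0,k_n]^{d-1}$ and each such boundary region can contribute at most $O(C^{k_n})$.

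To make this precise I would first set up a renewal structure. I tile $]-n,n]$ with short consecutive intervals $J_1,\dots, J_{m_n}$ and, for each $J_i$, introduce a \emph{good} event $G_i$ guaranteeing that the slab $J_i\times[0,k_n]^{d-1}$ is entirely contained in $L(\omega)$ --- for example, the slab is covered by a single ball of $\omega$ whose centre lies near $J_i$ and whose radius spans the cross-section, or alternatively by a suitable union of balls. When $G_i$ holds, every component of $\omega$ meeting the slab is automatically merged into one, so chains of consecutive renewal slabs propagate connectedness across the box. The integrability condition \eqref{eq_conjecture_cond1} is precisely what allows $p(z) := \pi^{z,\Radius}(G_i)$ to be driven arbitrarily close to $1$ as $z\to\infty$, uniformly in $i$, by controlling the expected distance in $x_1$ until a sufficiently large ball appears.

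Next I would derive the deterministic geometric bound
\[
\ncc(\omega) \le 1 + \kappa(k_n)\, N_{\text{bad}}(\omega),
\]
where $N_{\text{bad}}(\omega) = \#\{i : G_i \text{ fails on }\omega\}$ and $\kappa(k_n) \le C_0^{k_n}$ is an a priori cap on the number of components a single slab can produce. The restriction \eqref{eq_conjecture_cond2}, $\Radius(\{0\}) < 1/\Types$, is needed here so that an atom of $\Radius$ at zero cannot generate arbitrarily many isolated point-components that would overwhelm the FK weight $\Types^{\ncc}$. One then expands the numerator over the value of $N_{\text{bad}}$, and lower-bounds $Z_n^{(k_n)}$ by restricting the Poisson measure to the all-good event $\bigcap_i G_i$ (on which $\ncc = 1$), using FKG on the increasing events $G_i$ together with an almost-independence argument for $G_i$'s supported on well-separated slabs.

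The main obstacle, as I see it, is that a straightforward binomial estimate over $N_{\text{bad}}$ would produce a factor $\binom{m_n}{m}$ whose $n$-growth cannot be absorbed by the pointwise penalty $(1-p(z))^m\,\Types^{\kappa(k_n)\, m}$ for any $n$-independent choice of $z$. The \emph{fine} renewal argument alluded to in the text must therefore exploit a genuine product/factorisation of the partition function across well-separated renewal blocks, so that the leading exponential-in-$n$ contributions of numerator and denominator coincide and cancel exactly. Once that cancellation is established, only the boundary corrections at $x_1 = \pm n$ survive; each of those is controlled by $\kappa(k_n)$ and the per-slab geometry, and together they yield the bound $DC^{k_n}$ with $C$ and $D$ independent of $n$.
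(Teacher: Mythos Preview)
Your proposal identifies the right general tool (a renewal structure in the $x_1$-direction) but misses the two concrete moves that make the argument close, and this is why the ``main obstacle'' you flag remains unresolved. First, no cancellation between numerator and denominator is needed: the paper simply uses the crude bound $Z_n^{(k_n)}\ge 1$ (since $\Types^{\ncc}\ge 1$) and then bounds the Poisson numerator directly, after replacing $\ncc\,\Types^{\ncc}$ by $D\,\Typesbis^{\ncc}$ for a fixed $\Typesbis>\Types$ with $\Radius(\{0\})<1/\Typesbis$ (this is where condition~\eqref{eq_conjecture_cond2} enters). Second---and this is the key geometric idea you are missing---the factor $C^{k_n}$ does not come from boundary effects at $x_1=\pm n$ but from a \emph{transverse} decomposition: by subadditivity of $\ncc$ under disjoint unions, the thick slab $\Lambda_n^{(k_n)}$ is sliced into $k_n/k$ thin slabs of \emph{fixed} width $k$, giving
\[
\int \Typesbis^{\ncc}\, d\Poisson{\Intensity,\Radius}_{\Lambda_n^{(k_n)}}
\ \le\ \Bigl(\int \Typesbis^{\ncc}\, d\Poisson{\Intensity,\Radius}_{\Lambda_n^{(k)}}\Bigr)^{k_n/k}.
\]
It then suffices to bound the fixed-$k$ integral uniformly in $n$. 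For this the paper replaces each ball by a right-pointing rectangle $T(x,\tilde r)=[x,x+\tilde r]\times[0,k]^{\Dim-1}$ contained in $B(x,r)$; the resulting count $\nccd\ge\ncc$ is monotone in $n$, one can pass to $n=\infty$, and on the half-line $\nccd$ is \emph{exactly} a geometric random variable by renewal. The exponential moment $\sum_\alpha \Typesbis^{\alpha}p(z)(1-p(z))^{\alpha-1}$ is finite as soon as $p(z)>1-1/\Typesbis$, which holds for $z$ large (Lemma~\ref{lemme_preuve_conv_geometrique}, using condition~\eqref{eq_conjecture_cond1}).

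By contrast, your scheme keeps the renewal inside the full $k_n$-thick slab, so your good event $G_i$ must cover a cross-section of volume $k_n^{d-1}$ and its probability cannot be kept close to $1$ uniformly in $n$. Worse, your deterministic bound $\ncc\le 1+\kappa(k_n)\,N_{\text{bad}}$ with $\kappa(k_n)\le C_0^{k_n}$ feeds into $\Types^{\ncc}$ to produce a doubly-exponential factor $\Types^{C_0^{k_n}N_{\text{bad}}}$, which no plausible tail of $N_{\text{bad}}$ can absorb. The transverse reduction to a slab of fixed width $k$ and the exact geometric law of $\nccd$ on the half-line are the missing ingredients.
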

\begin{remark}
In the case $\Dim=1$ one can prove in the same fashion the bound
\begin{align*}
\int_{\ConfSpace} \ncc (\Conf) P_n^{\text{free}}(d\omega) \leq
C
\end{align*}
for large enough $z$ and for a constant $C>1$ independent of $n$.
\end{remark}
Before proving this proposition, let us show how it leads to the expected result.
By the computation done for the proof of Theorem \ref{theo_existence_wr}, see Section \ref{section_preuve_theo_existence}, we have
\begin{align*}
\Entropy (\bar{P}_n^{\text{free}})
&= \frac{1}{\Leb \left(\Lambda_n^{(k_n)} \right)}
\Entropy \left(P_n^{\text{free}} | 
\Poisson{\Intensity, \Radius}_{\Lambda_n^{(k_n)}} \right)
\\ &= 
\frac{1}{\Leb \left(\Lambda_n^{(k_n)} \right)}
\left(
-\log (Z_n^{(k_n)}) +  \log(\Types) 
\int_\ConfSpace \ncc \ d P_n^{\text{free}}
\right)
\\ &\leq \frac{D \times C^{k_n} \log (\Types)}{2nk_n^{d-1}}\\
& \leq D \log(q) C^{k_n-\log(n)/\log(C)}.  
\end{align*}
Since $(k_n)$ is negligible with respect to $\log(n)$, this upper-bound tends to zero when $n$ goes to infinty. So by the lower semi-continuity of the specific entropy, we have
$\Entropy (P^{\text{free}})=0$ which implies that $P^{\text{free}}=\Poisson{\Intensity, \Radius}$.
Noting that the condition \eqref{eq_conjecture_cond1} implies in particular that $\int_{\R^+}r^\Dim \Radius (dr) = \infty$, so the Poisson Boolean model covers almost surely the whole space $\R^d$ with one connected component.
Therefore, since $\MultiP^{\text{free}}(\A)=1$, the measure $\MultiP^{\text{free}}$ has no choice but to be monochromatic. By symmetry of the model, each color appears with probability $1/q$.

\subsection{Proof of Proposition \ref{proposition_majoration_ncc}}
First let $k>0$ which is taken for simplicity as the inverse of a positive integer.
A condition on $k$ will appear later. 
Furthermore let us consider $\Typesbis>\Types$ such that $\Radius (\{ 0 \})<1/\Typesbis$.

Since $Z_n^{(k_n)}\geq 1$, we have for a constant $D>1$ large enough, 
\begin{align}
\label{eq_preuve_conj_retour_poisson}
\int_{\ConfSpace} \ncc \  d  P_n^{\text{free}}  
& \leq 
\int_{\ConfSpace}  \ncc \ \Types^{ \ncc } \ d
\Poisson{\Intensity , \Radius }_{\Lambda_n^{(k_n)}}
\nonumber \\ & \leq
D \int_{\ConfSpace}  \Typesbis^{ \ncc } \ d
\Poisson{\Intensity , \Radius }_{\Lambda_n^{(k_n)}}
\nonumber \\ & \leq
D \left( \int_{\ConfSpace}  \Typesbis^{ \ncc } \ d
\Poisson{\Intensity , \Radius }_{\Lambda_n^{(k)}} \right) ^{k_n/k},
\end{align}
where the last inequality is a consequence of the stationarity of the Poisson Boolean model and the subadditivity of the number of connected components, i.e.
$\ncc(\Conf_{\Delta_1 \cap \Delta_2})
\leq \ncc(\Conf_{\Delta_1 }) +\ncc(\Conf_{ \Delta_2})$ for disjoint $\Delta_1$ and $\Delta_2$.
\begin{remark}
In the case $\Dim=1$ there is no need to introduce $k_n$ and $k$ and we directly have
\begin{align*}
\int_{\ConfSpace} \ncc \  d  P_n^{\text{free}}  
\leq D
\int_{\ConfSpace}  \Typesbis^{ \ncc } \ d
\Poisson{\Intensity , \Radius }_{\Lambda_n},
\end{align*}
where $\Lambda_n=]-n,n]$.
\end{remark}

Therefore, it is enough to show that that for a fixed $k$ small enough the sequence
\begin{align*}
  \int_{\ConfSpace}  \Typesbis^{ \ncc } \ d
\Poisson{\Intensity , \Radius }_{\Lambda_n^{(k)}}
\end{align*}
is uniformly bounded in $n\ge 1$.

We are going to introduce the notion of \emph{connected component to the right}.
Thanks to a renewal phenomenon, the number of connected components to the right will have a geometric law.
Then we will prove, using the geometric law, that this number of connected components to the right admits exponential moments for $\Intensity$ large enough.

More precisely we have
\begin{align}
\label{eq_preuve_conj_droite_1}
\int_{\ConfSpace} \Typesbis^{ \ncc } \ d
\Poisson{\Intensity , \Radius }_{\Lambda_n^{(k)}}
 & \leq
\left(
\int_{\ConfSpace}  \Typesbis^{ \ncc } \ d
\Poisson{\Intensity , \Radius }_{\Lambda_n^{(k+)}} \right)^2
\end{align}
where  this inequality comes from the stationarity and the independence property of $\Poisson{\Intensity, \Radius}$.

Now the problem is that when $n$ grows, a ball can appear and cover all existing connected components.
So seeing $n$ as the time, the present is depending on the yet unknown future.
To overcome this issue let us do the following.
To a radius $r$ we associate the following transformed radius
\begin{align*}
\tilde{r}
=
\sqrt{r^2 - (\Dim -1)k^2} \ \1_{r^2 \geq (\Dim -1)k^2}.
\end{align*}
This way if $(x,r)$ is a marked point, the set 
\begin{align*}
T(x,\tilde{r})=[x,x+\tilde{r}]\times [0,k]^{\Dim - 1}
\end{align*} 
does not cover anything left of $x$ and is entirely covered by the ball $B(x,r)$.
%, see Figure \ref{fig_nccd}.
\begin{remark}
In dimension $\Dim=1$ we simply consider $\tilde{r}=r$.
\end{remark}
Now we are defining the \emph{number of connected components to the right}, denoted by $ \nccd (\Conf)$, as the number of connected components of
$\underset{(x,r) \in \Conf}{\bigcup} T(x,\tilde{r})$.
%\begin{figure}
%\begin{center}
%\includegraphics[width=10cm]{cc_droite.eps}
%\end{center}
%\caption{blop}
%\label{fig_nccd}
%\end{figure}
We have $\ncc \leq \nccd $ and $\nccd$ is increasing with respect to $n$, which implies
\begin{align}
\label{eq_preuve_conj_plus_de_n_1}
\int_{\ConfSpace}  \Typesbis^{
\ncc } \ d 
\Poisson{\Intensity, \Radius}_{\Lambda_n^{(k+)}}
 &\leq
\int_{\ConfSpace}  \ \Typesbis^{
\nccd } \ d 
\Poisson{\Intensity, \Radius}_{\Lambda_n^{(k+)}}
\leq
\int_{\ConfSpace}   \Typesbis^{\nccd } \ d
\Poisson{\Intensity, \Radius}_{\Lambda_\infty^{(k+)}}
\end{align}
where the right-hand side in \eqref{eq_preuve_conj_plus_de_n_1} does not depend on $n$.
Therefore in order to prove Proposition \ref{proposition_majoration_ncc} we have to prove that the right-hand side of \eqref{eq_preuve_conj_plus_de_n_1} is finite.

The first thing to do is to prove that the quantity 
$\nccd \left(\Conf \right)$ is 
$\Poisson{\Intensity, \Radius}_{\Lambda_{\infty}^{(k+)}}$ almost surely finite.
Then we will be able to show that this quantity admits exponential moments for $\Intensity$ large enough. To this end it is sufficient to study the number of connected components of a segment model where the left extremities of the segments are the points of a Poisson point process (on the real line) of intensity  $\Intensity k^\Dim$,
 and where the lengths of the segments are independent random variables distributed accordingly to the measure 
$\tilde{\Radius}$ satisfying 
\begin{align*}
\tilde{\Radius} \left([0,r]\right) = 
\Radius (\, [ 0 , \sqrt{r^2 + (d-1)k^2} ] \ ) .
\end{align*}
Condition \eqref{eq_conjecture_cond1} transfers trivially to $\tilde{\Radius}$.
By taking $k$ small enough, we also have $\tilde{\Radius}(\{ 0 \}) <1/\Typesbis$.

The next lemma, proved in \cite[Corollary 4]{fitzsimmons_fristedt_shepp}, ensures that
$\nccd \left(\Conf\right)$ is 
$\Poisson{\Intensity, \Radius}_{\Lambda_{\infty}^{(k+)}}$ almost surely finite.
\begin{lemma}
\label{lemme_percolation_segment_a_droite}
For every $\Intensity \geq k^{-\Dim}$, the quantity
$\nccd \left(\Conf\right)$ is 
$\Poisson{\Intensity, \Radius}_{\Lambda_{\infty}^{(k+)}}$ almost surely finite.
\end{lemma}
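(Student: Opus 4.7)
The plan is to reduce the $d$-dimensional tube problem to a one-dimensional covering problem on the half-line and then invoke the cited corollary of Fitzsimmons-Fristedt-Shepp as a black box.

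First, I would project the Poisson point process on $\Lambda_\infty^{(k+)} = \,]0,\infty[\, \times [0,k]^{\Dim -1}$ onto its first coordinate. Since $\Poisson{\Intensity,\Radius}$ has intensity measure $\Intensity \Leb \otimes \Radius$, the projection is a Poisson point process on $\,]0,\infty[\,$ with intensity $\lambda := \Intensity k^{\Dim -1}$, and each point $x$ retains an independent radius $r$ distributed by $\Radius$, which is then transformed to $\tilde{r}$. Crucially, because the tube $T(x,\tilde{r}) = [x,x+\tilde{r}]\times [0,k]^{\Dim-1}$ occupies the full transverse cross-section, two tubes $T(x,\tilde{r})$ and $T(x',\tilde{r}')$ intersect if and only if the right-going intervals $[x,x+\tilde{r}]$ and $[x',x'+\tilde{r}']$ intersect on the real line. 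Consequently $\nccd (\Conf)$ is exactly the number of connected components of the union of those right-going intervals produced by the one-dimensional marked process.

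Next, I would transport the integrability assumption to the induced segment-length distribution $\tilde{\Radius}$. From $\tilde{\Radius}([0,u])=\Radius([0,\sqrt{u^2+(\Dim-1)k^2}])$ one reads off $\tilde{\Radius}(]r,\infty[)=\Radius(]\sqrt{r^2+(\Dim-1)k^2},\infty[)$. Since $\sqrt{r^2+(\Dim-1)k^2}=r+O(1/r)$ for large $r$, the tails of $\tilde{\Radius}$ and $\Radius$ differ only by a bounded shift, so
\[
\int_1^{\infty}\exp\!\left(-\lambda \int_1^{u} \tilde{\Radius}(\,]r,\infty[\,)\,dr\right) du \;<\;\infty
\]
inherits directly from \eqref{eq_conjecture_cond1}, provided we also use the hypothesis $\Intensity \geq k^{-\Dim}$, which translates to $\lambda \geq 1/k$ and hence bounds the intensity from below by a positive constant depending only on $k$.

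Finally, I would appeal to \cite[Corollary 4]{fitzsimmons_fristedt_shepp} for the one-dimensional right-directed covering problem just constructed. That corollary characterises, through an integral criterion of exactly the shape above, the almost-sure finiteness of the number of connected components of the union of segments generated by a marked Poisson process on the line. The convergent integral obtained in the previous paragraph is precisely the admissible input, and the lower bound on $\lambda$ is what forces the renewal structure underlying the corollary to terminate in finite time. This will give $\nccd(\Conf)<\infty$ $\Poisson{\Intensity,\Radius}_{\Lambda_{\infty}^{(k+)}}$-almost surely, as claimed.

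The main obstacle I anticipate is not in the reduction itself but in lining up the hypotheses: the exact form of \cite[Corollary 4]{fitzsimmons_fristedt_shepp} must be checked to accept a right-directed (rather than two-sided) covering, with the specific combination of intensity $\lambda = \Intensity k^{\Dim-1}$ and tail condition on $\tilde{\Radius}$ derived here. Once that matching is verified, the rest of the argument is a routine change of variables and a projection computation.
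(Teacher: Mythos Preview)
Your proposal is correct and follows essentially the same route as the paper: the paper does not give an independent proof of this lemma but sets up exactly the one-dimensional reduction you describe (projection to the first coordinate, transfer of condition~\eqref{eq_conjecture_cond1} to $\tilde{\Radius}$) and then invokes \cite[Corollary 4]{fitzsimmons_fristedt_shepp} as a black box. Your projected intensity $\lambda=\Intensity k^{\Dim-1}$ is in fact the correct value (the paper writes $\Intensity k^{\Dim}$, which appears to be a typo), and since $k\le 1$ the hypothesis $\Intensity\ge k^{-\Dim}$ still yields $\lambda\ge 1$, which is what is needed to match the integral criterion with~\eqref{eq_conjecture_cond1}.
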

This lemma, working thanks to the condition \eqref{eq_conjecture_cond1}, ensures in particular that up to a random point $\mathcal{Y}$, the "cylinder" 
$[\mathcal{Y},\infty] \times [0,k]^{\Dim -1}$ is covered by 
$\underset{(x,r) \in \Conf_{\Lambda_{\infty}^{(k+)}} }{\bigcup}
T(x,\tilde{r})$.
So when we go through $\Lambda_{\infty}^{(k+)}$ from the origin, each connected component encountered has a positive probability $p(z)$ of being the infinite one, independently of all finite connected components already encountered.
Therefore the random variable 
$\nccd \left(\Conf \right)$ is geometric of mean $p(z)^{-1}$ satisfying
\begin{align*}
p(z)^{-1}= \int_{\ConfSpace} \nccd 
\left(\Conf \right)
\Poisson{\Intensity, \Radius}_{\Lambda_{\infty}^{(k+)}} (d \Conf)
\end{align*}
and
\begin{align*}
p(z)= \Poisson{\Intensity, \Radius}_{\Lambda_{\infty}^{(k+)}}
\left(  \nccd =1 \right).
\end{align*}
So an explicit computation of the right-hand side of \eqref{eq_preuve_conj_plus_de_n_1} leads to
\begin{align}
\label{eq_somme_geometrique}
\int_{\ConfSpace}  \Typesbis^{\nccd} \ d
\Poisson{\Intensity, \Radius}_{\Lambda_{\infty}^{(k+)}}
=
\sum_{\alpha \geq 0} \Typesbis^\alpha p(z) (1-p(z))^{\alpha -1}
\end{align}
which is finite as soon as $p(z)> 1 - 1/ \Typesbis$.

In the following lemma we prove that for large activities $\Intensity$, $p(z)$ is as close to 1 as we need.
\begin{lemma}
\label{lemme_preuve_conv_geometrique}
For  $\Intensity$ large enough
\begin{align*}
p(z)  >  1 - 1/ \Typesbis .
\end{align*}
\end{lemma}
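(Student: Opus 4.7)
The plan is to estimate $E[\nccd]$ directly via the Mecke formula and then convert the result to a bound on $p(z)$ using the identity $E[\nccd]=p(z)^{-1}$ that was already derived from the geometric structure. Morally, $p(z)$ is bounded above by $\alpha := 1-\tilde Q(\{0\})$ --- a configuration with $\tilde r_1 = 0$ produces an isolated leftmost cross-section $\{X_1\}\times[0,k]^{\Dim-1}$ that cannot meet any segment with larger $x$-coordinate, forcing $\nccd\ge 2$ --- and this same value turns out to be the limit of $p(z)$ as $\Intensity\to\infty$. Since $k$ was chosen so that $\tilde Q(\{0\})<1/\Typesbis$, one has $\alpha>1-1/\Typesbis$, and any result of the form $p(z)\to\alpha$ immediately yields the lemma for all large enough $\Intensity$.

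As the authors explain, the component structure of $\bigcup_{(x,r)\in\omega} T(x,\tilde r)$ depends only on the one-dimensional overlaps of the intervals $[x,x+\tilde r]$, so we work with the one-dimensional Poisson segment model on $]0,\infty[$ of intensity $\lambda$ (proportional to $\Intensity$ and tending to $\infty$ with $\Intensity$) and i.i.d.\ lengths distributed by $\tilde Q$. Each connected component contains a unique leftmost point, characterised by the property that no strictly earlier segment covers it, so
\[
\nccd \;=\; \sum_{(x,r)\in\omega} \1\!\bigl\{ x' + \tilde r' < x \text{ for every } (x',r')\in\omega \text{ with } x' < x \bigr\}.
\]
Setting $F(x) := \int_0^x \tilde Q(]v,\infty[)\,dv$, the Mecke formula applied to the marked Poisson process gives
\[
E[\nccd] \;=\; \lambda \int_0^\infty e^{-\lambda F(x)}\, dx,
\]
because for any fixed $x>0$ the probability that no point of $\omega$ in $[0,x)$ carries a segment reaching $x$ is exactly $e^{-\lambda F(x)}$.

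The asymptotics $\lambda\to\infty$ is now a standard Laplace-type analysis. As $v\downarrow 0$ one has $\tilde Q(]v,\infty[)\to\alpha$, hence $F(x)=\alpha x+o(x)$; sandwiching $F$ between $(\alpha\pm\varepsilon)x$ on a short interval $[0,\delta]$ yields
\[
\lambda\int_0^\delta e^{-\lambda F(x)}\,dx \;\longrightarrow\; \frac{1}{\alpha}\quad\text{as }\lambda\to\infty.
\]
For the tail, $F$ is non-decreasing with $F(\delta)>0$, so
\[
\lambda\int_\delta^\infty e^{-\lambda F(x)}\,dx \;\le\; \lambda\, e^{-(\lambda-1)F(\delta)}\int_\delta^\infty e^{-F(x)}\,dx,
\]
and the remaining integral is finite by assumption \eqref{eq_conjecture_cond1}, which transfers from $\Radius$ to $\tilde Q$ as noted in the text, while the prefactor decays exponentially fast in $\lambda$. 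Combining both pieces we get $E[\nccd]\to 1/\alpha$, and the identity $p(z)=E[\nccd]^{-1}$ then delivers $p(z)\to\alpha>1-1/\Typesbis$, so $p(z)>1-1/\Typesbis$ for all sufficiently large $\Intensity$.

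The main obstacle is the tail estimate: without the Shepp-type integrability assumption \eqref{eq_conjecture_cond1}, the integral $\int_\delta^\infty e^{-F(x)}\,dx$ could diverge, and then the prefactor $\lambda$ would overwhelm the exponential decay, leaving an $O(1)$ remainder that would obstruct convergence of $p(z)$ to $\alpha$. Everything else in the argument is either a routine application of the Mecke formula or an elementary Laplace-type asymptotic.
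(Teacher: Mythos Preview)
Your argument is correct and takes a genuinely different route from the paper. The paper bounds $p(z)=\Poisson{\Intensity,\Radius}_{\Lambda_\infty^{(k+)}}(\nccd=1)$ from below by the probability of an explicit event --- roughly, the first segment is long enough to reach into a region that is already right-covered by the remaining points --- and then invokes Lemma~\ref{lemme_convergence_point_le_plus_a_gauche} (itself resting on the Shepp covering result of Lemma~\ref{lemme_percolation_segment_a_droite}) to push this probability above $1-1/\Typesbis$ for large $\Intensity$. You instead compute $E[\nccd]=p(z)^{-1}$ exactly via the Mecke formula, obtaining the closed form $\lambda\int_0^\infty e^{-\lambda F(x)}\,dx$, and then run a Laplace-type asymptotic to show $E[\nccd]\to 1/\alpha$ with $\alpha=1-\tilde Q(\{0\})$. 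This is cleaner in that it bypasses Lemma~\ref{lemme_convergence_point_le_plus_a_gauche} entirely and yields the sharper conclusion $p(z)\to\alpha$, identifying the limiting value rather than merely crossing the threshold $1-1/\Typesbis$; the price is that the role of hypothesis~\eqref{eq_conjecture_cond1} becomes slightly more hidden (it enters only to make the tail integral $\int_\delta^\infty e^{-F(x)}\,dx$ finite), whereas the paper's argument makes the covering picture more visible. Both proofs ultimately rely on the same two ingredients --- the choice of $k$ giving $\tilde Q(\{0\})<1/\Typesbis$, and the Shepp-type integrability~\eqref{eq_conjecture_cond1} --- so neither is strictly more elementary, but yours is more quantitative.
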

With this lemma we have for $\Intensity$ large enough that
\begin{align}
C := \left( \int_{\ConfSpace}  
\Typesbis^{\nccd } \ d
\Poisson{\Intensity, \Radius}_{\Lambda_{\infty}^{(k+)}} \right) ^{2/k}
< \infty   \nonumber
 \end{align}
 which combined with \eqref{eq_preuve_conj_retour_poisson},
\eqref{eq_preuve_conj_droite_1}, 
\eqref{eq_preuve_conj_plus_de_n_1}
and \eqref{eq_somme_geometrique} concludes the proof of Proposition \ref{proposition_majoration_ncc}.

The last thing needed is to prove Lemma \ref{lemme_preuve_conv_geometrique}.

\subsection{Proof of Lemma \ref{lemme_preuve_conv_geometrique}}
We know that $\nccd \left(\Conf\right)$ is a geometric random variable.
We have
\begin{align}
p(z) & =
\Poisson{\Intensity, \Radius}_{\Lambda_{\infty}^{(k+)}} 
\left(  \nccd =1 \right)
\nonumber \\ & \geq 
\Poisson{\Intensity, \Radius}_{\Lambda_{\infty}^{(k+)}}  \Big( 
\nccd \left(\Conf\right) =1,
[x + y,\infty[ \text{ is right-covered in }  
\Conf_{]x, \infty[ \times[0,k]^{\Dim -1}} \Big)
,   \nonumber
\end{align}
where $x$ and $y$ are positive real numbers and 
we say that $[x,\infty[$ is right-covered in $\Conf$ if 
\begin{align}
[x,\infty[ \times[0,k]^{\Dim -1}  \ \subseteq
\underset{ (a,r) \in \Conf}{\bigcup} T(a,\tilde{r}).
\nonumber
\end{align}
Therefore we have
\begin{align}
p(z)
 \geq &
\left(1-e^{-\Intensity x} \right) \
\tilde{\Radius}([x+y, \infty [) 
\nonumber \\
& \hspace{3cm}
\Poisson{\Intensity, \Radius} ( [x + y,\infty[ \text{ is right-covered in }  
\Conf_{]x, \infty[ \times[0,k]^{\Dim -1}}) 
   \nonumber
\\ =&
\left(1-e^{-\Intensity x  }\right) \
\tilde{\Radius}([x+y, \infty [) 
\nonumber \\
& \hspace{3cm} 
\Poisson{\Intensity, \Radius} ( [y,\infty[ \text{ is right-covered in }  
\Conf_{[0, \infty[ \times[0,k]^{\Dim -1}}),
\nonumber
\end{align}
where the last equality comes from the stationarity of $\Poisson{\Intensity, \Radius}$.

\begin{lemma}
\label{lemme_convergence_point_le_plus_a_gauche}
For $y>0$ fixed, the probability
\begin{align}
\Poisson{\Intensity, \Radius} ( [y,\infty[ \text{ is right-covered in }  
\Conf_{[0, \infty[ \times[0,k]^{\Dim -1}})
\nonumber
\end{align}
converges to $1$ when $z$ grows to infinity.
\end{lemma}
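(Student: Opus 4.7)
The plan is to reduce to a one-dimensional coverage problem and then combine a Harris--FKG inequality with the finiteness given by Lemma \ref{lemme_percolation_segment_a_droite}. Since $T(a,\tilde r)=[a,a+\tilde r]\times[0,k]^{\Dim-1}$, the cylinder $[y,\infty[\times[0,k]^{\Dim-1}$ is right-covered in $\Conf$ if and only if $[y,\infty[\subseteq\bigcup_{(a,r)\in\Conf}[a,a+\tilde r]$ in $\R$. The left-endpoints form a Poisson point process on $[0,\infty[$ of intensity $\lambda:=\Intensity k^{\Dim-1}$, independently marked with lengths distributed as $\tilde\Radius$.

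For $M>0$ I introduce the event $B_M:=\{\exists (a,r)\in\Conf_{[0,y]\times[0,k]^{\Dim-1}},\ \tilde r\ge y+M\}$; on $B_M$ the single segment $[a,a+\tilde r]$ contains $[y,y+M]$, so, writing $A_y$ for the event that $[y,\infty[$ is right-covered,
\[A_y\supseteq B_M\cap A_{y+M}.\]
Both events are increasing in $\Conf$ and hence positively correlated under $\Poisson{\Intensity,\Radius}$ by the Harris--FKG inequality for Poisson point processes, which gives
\[\Poisson{\Intensity,\Radius}(A_y)\ \ge\ \Poisson{\Intensity,\Radius}(B_M)\cdot\Poisson{\Intensity,\Radius}(A_{y+M}).\]
A direct Poisson computation yields $\Poisson{\Intensity,\Radius}(B_M)=1-\exp(-\lambda y\,\tilde\Radius([y+M,\infty[))$, and since $\Radius$ has unbounded support (by \eqref{eq_conjecture_cond1}), so does $\tilde\Radius$; thus $\tilde\Radius([y+M,\infty[)>0$ and $\Poisson{\Intensity,\Radius}(B_M)\to 1$ as $\Intensity\to\infty$.

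Now set $p^*(y):=\lim_{\Intensity\to\infty}\Poisson{\Intensity,\Radius}(A_y)$, which exists by monotone coupling since $A_y$ is increasing and Poisson processes are stochastically increasing in intensity. Passing to the limit $\Intensity\to\infty$ in the FKG inequality gives $p^*(y)\ge p^*(y+M)$ for every $M>0$, and since $A_y$ is also non-decreasing in $y$ the function $p^*$ is in fact constant in $y$. By Lemma \ref{lemme_percolation_segment_a_droite}, for $\Intensity_0\ge k^{-\Dim}$ the random covering point is a.s.\ finite, so $\Poisson{\Intensity_0,\Radius}(A_{y'})\to 1$ as $y'\to\infty$, which forces $p^*(y')\to 1$ and therefore $p^*\equiv 1$. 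The main obstacle that this indirect argument sidesteps is the uncountable intersection hidden in the definition of $A_y$: a naive union bound over $x\in[y,\infty[$ fails, and a direct analysis would require delicate renewal estimates for the Lindley-type recursion governing the gap structure of the one-dimensional segment model; reducing to the two-event inclusion $A_y\supseteq B_M\cap A_{y+M}$ and exploiting monotonicity in both $\Intensity$ and $y$ avoids this step.
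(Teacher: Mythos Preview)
Your argument is correct. It differs from the paper's proof, which is a short coupling argument: under the natural monotone coupling of the Poisson processes in $\Intensity$, the left endpoint $\mathcal{Y}(\Intensity)$ of the (a.s.\ unique) infinite right-covered component is non-increasing in $\Intensity$, and the paper asserts that $\mathcal{Y}(\Intensity)\to 0$ almost surely as $\Intensity\to\infty$, from which the conclusion is immediate. Your route avoids having to justify this last a.s.\ convergence: instead of tracking $\mathcal{Y}$ as $\Intensity$ varies, you fix $\Intensity$ only at the very end and use Lemma~\ref{lemme_percolation_segment_a_droite} at a \emph{single} intensity $\Intensity_0$ to get $\Poisson{\Intensity_0,\Radius}(A_{y'})\to 1$ as $y'\to\infty$; the FKG step $A_y\supseteq B_M\cap A_{y+M}$ then transports this to every $y$ via the constancy of $p^*$. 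The trade-off is that your proof is longer and invokes Harris--FKG, while the paper's is shorter but leaves the monotone-coupling claim $\mathcal{Y}(\Intensity)\downarrow 0$ to the reader. Two minor remarks: your intensity $\lambda=\Intensity k^{\Dim-1}$ is the correct projection (the paper writes $\Intensity k^{\Dim}$, which appears to be a typo); and when you say ``$A_y$ is non-decreasing in $y$'' you mean the inclusion $A_y\subseteq A_{y+M}$, which together with $p^*(y)\ge p^*(y+M)$ indeed forces $p^*$ to be constant.
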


\begin{proof}[Proof of Lemma \ref{lemme_convergence_point_le_plus_a_gauche}]
Consider the realisation of a Poisson point process on the half line of intensity $\Intensity $ and with segment length of law 
$\tilde{\Radius}$. 
Thanks to Lemma \ref{lemme_percolation_segment_a_droite},
we know that this segment model percolates for $\Intensity\geq k ^{- \Dim}$.
Therefore it exists a infinite connected component starting from 
a random point $\mathcal{Y}$.
If $y$ is inside this infinite connected component then there is nothing to do.
Otherwise by increasing  $\Intensity$ we are adding Poisson point and 
$\mathcal{Y}$ is translating to the left, converging almost surely to $0$ when $z$ grows to infinity, and overlapping $y$ after a finite random time.
From this almost surely convergence we deduce the convergence of the probability of Lemma \ref{lemme_convergence_point_le_plus_a_gauche}.
\end{proof}

Let us now conclude the proof of Lemma \ref{lemme_preuve_conv_geometrique}.

For  $x,y$ small enough, thanks to the construction of $\tilde{\Radius}$ satisfying  $\tilde{\Radius}(\{0 \})<1/ \Typesbis$, we have
$\tilde{\Radius}([x+y, \infty [) > 1-1 / \Typesbis$.
Now that $x$ and $y$ are fixed, thanks to Lemma \ref{lemme_convergence_point_le_plus_a_gauche} we have for $z$ large enough that the same is true for
\begin{align}
\left(1-e^{-\Intensity x  }\right) \
\tilde{\Radius}([x+y, \infty [) 
\Poisson{\Intensity, \Radius} ( [y,\infty[ \text{ is right-covered in }  
\Conf_{[0, \infty[ \times[0,k]^{\Dim -1}}) .
\nonumber
\end{align}
Thus for $\Intensity$ large enough $p(\Intensity)>1-1/ \Typesbis$ and Lemma \ref{lemme_preuve_conv_geometrique} is proved.

\vspace{1cm}
%%%%%%%%%%%%%%%%%%%%%%%%%%%
{\it Acknowledgement:} This work was supported in part by the Labex CEMPI  (ANR-11-LABX-0007-01), the GDR 3477 Geosto and the ANP project PPPP (ANR-16-CE40-0016).

\newpage 
%%%%%%%
%\nocite{*}
\bibliographystyle{plain}
\bibliography{biblio}
\end{document}